\documentclass[11pt]{article}
\usepackage{amssymb,amsmath,amsfonts,amsthm,epsfig,latexsym,color}
\usepackage{amsfonts,amsmath,amsthm, amssymb, amscd, mathrsfs}
\usepackage{latexsym, euscript, epic, eepic}
\usepackage{graphicx}
\usepackage{verbatim}
\usepackage{fourier}
\usepackage[all]{xy}
\usepackage[colorlinks=true,backref=page]{hyperref}

\textwidth=14. true cm
\textheight=21. true cm
\voffset=-1 true cm
\hoffset = -1 true cm

\makeatletter
\newcommand{\subsectionruninhead}{\@startsection{subsection}{2}{0mm}
{-\baselineskip}{-0mm}{\bf\large}}
\newcommand{\subsubsectionruninhead}{\@startsection{subsubsection}{3}{0mm}
{-\baselineskip}{-0mm}{\bf\normalsize}}
\makeatother

\newtheorem*{theorem*}{Theorem}

\newtheorem{theoremalph}{Theorem}
\newtheorem{corollaryalph}[theoremalph]{Corollary}

\newtheorem*{proposition*}{Proposition}
\newtheorem*{corollary*}{Corollary}
\newtheorem*{claim*}{Claim}
\newtheorem*{remark*}{Remark}
\newtheorem*{problem*}{Problem}
\newtheorem{theorem}{Theorem}[section]

\newtheorem{proposition}[theorem]{Proposition}

\newtheorem{lemma}[theorem]{Lemma}
\newtheorem{notation}[theorem]{Notation}

\newtheorem{claim}[theorem]{Claim}
\theoremstyle{definition}

\newtheorem{remark}[theorem]{Remark}

\numberwithin{equation}{section}

\def\cF{\mathcal{F}}

\newcommand{\diff}{\operatorname{Diff}}

\newcommand{\id}{\operatorname{Id}}

\newcommand{\per}{\operatorname{Per}}
\newcommand{\ph}{\operatorname{PH}}
\newcommand{\dph}{\operatorname{DPH}}
\newcommand{\deph}{\operatorname{DEPH}}

\newcommand{\CR}{\operatorname{CR}}
\newcommand{\grap}{\operatorname{Graph}}
\newcommand{\length}{\operatorname{length}}

\setcounter{tocdepth}{3}

\begin{document}

\title{$C^r$-Chain closing lemma for certain partially hyperbolic diffeomorphisms}

\author{Yi Shi\footnote{Y. Shi was partially supported by National Key R\&D Program of China (2020YFE0204200, 2021YFA1001900), NSFC (12071007, 11831001, 12090015) and the Fundamental Research Funds for the Central Universities.}  
\quad and \quad 
Xiaodong Wang\footnote{X. Wang was partially supported by National Key R\&D Program of China (2021YFA1001900), NSFC (12071285) and Innovation Program of Shanghai Municipal Education Commission (No. 2021-01-07-00-02-E00087).}}

\maketitle

\begin{abstract}
    For every $r\in\mathbb{N}_{\geq 2}\cup\{\infty\}$, we prove a $C^r$-orbit connecting  lemma for dynamically coherent and plaque expansive partially hyperbolic diffeomorphisms with 1-dimensional orientation preserving center bundle. To be precise, for such a diffeomorphism $f$, if a point $y$ is chain attainable from  $x$ through pseudo-orbits, 
    then for any neighborhood $U$ of $x$ and any neighborhood $V$ of $y$, there exist true orbits  from $U$ to $V$ by arbitrarily $C^r$-small perturbations.
    %then $y$ is attainable from $x$ through true orbits by arbitrarily small $C^r$-perturbations. 
    As a consequence, we prove that for $C^r$-generic diffeomorphisms in this class, periodic points are dense in the chain recurrent set, and chain transitivity implies transitivity.
\end{abstract}

\section{Introduction}

Following from Yoccoz \cite{yoccoz}, the goal of the theory of dynamical systems is to understand most of the dynamics of most systems.
Here ``most systems'' means dense, generic, open and dense dynamical systems. The various perturbation techniques have played  crucial roles in the study of most systems, for instance in the exploration of the famous $C^1$-stability conjectures.
Pugh's celebrated work: the $C^1$-closing lemma~\cite{Pugh} which realizes closing a non-wandering orbit to get periodic orbits, initiated the theory of $C^1$-perturbations.
Ma\~n\'e's ergodic closing lemma~\cite{mane-ergodic-closing} provides more information on the closing points. 
Another work by Ma\~n\'e~\cite{mane-h-point} creates homoclinic points by $C^r,r=1,2$ perturbations.
These two works~\cite{mane-ergodic-closing,mane-h-point} are indispensable   in solving the $C^1$-stability conjecture~\cite{mane-ergodic-closing,mane-stability-conjecture} and ergodic closing lemma is a milestone in both differentiable dynamical systems and ergodic theory.
An improved version of $C^1$-closing lemma is obtained by Hayashi~\cite{Hayashi} that connects two orbits which visit a same small region. Hayashi's work is now called the $C^1$-connecting lemma, see also works by Wen-Xia~\cite{WX-connecting}. More developments can be found for instance in~\cite{Arnaud,gw}. 
The strongest $C^1$-perturbation lemma is the $C^1$-chain connecting lemma proved by Bonatti-Crovisier~\cite{BC-chain-connecting}, which connects pseudo orbits under small $C^1$-perturbations.
In particular, works in~\cite{BC-chain-connecting,C-IHES} realize closing a chain recurrent orbit (or a chain recurrent set).

We recall some notions and definitions now.
Let $M$ be a $C^\infty$ closed Riemmanian manifold and denote by $\diff^r(M)$ for any $r\in\mathbb{N}\cup\{\infty\}$ the space of $C^r$-diffeomorphisms on $M$ endowed with the $C^r$-topology.

Given $f\in \diff^r(M)$ and $\delta>0$. A collection of points 
$$\big\{x_a,x_{a+1},\cdots,x_b\big\} \text{~~where~~} -\infty\leq  a<b\leq +\infty$$ 
is called a \emph{$\delta$-pseudo orbit} if it holds
$$d(x_{i+1},f(x_i))<\delta~~~\text{for every $a\leq i\leq b-1$}.$$
Given two points $x,y\in M$. One says that $y$ is \emph{attainable} from $x$ under $f$, denoted by $x\prec_f y$, if for any neighborhood $U$ of $x$ and any neighborhood $V$ of $y$, there exist $z\in U$ and $n\geq 1$ such that $f^n(z)\in V$; and $y$ is \emph{chain attainable} from $x$ under $f$, denoted by $x\dashv_f y$, if for any $\delta>0$, there exists a $\delta$-pseudo orbit $\big\{x_0,x_{1},\cdots,x_n\big\}$ from $x$ to $y$, i.e. $x_0=x$ and $x_n=y$. 
A point $x\in M$ is \emph{non-wandering} if $x\prec_f x$, and the set of non-wandering points of $f$ is denoted by $\Omega(f)$. A point $x\in M$ is \emph{chain recurrent} if $x\dashv_f x$, and the set of chain recurrent points of $f$ is denoted by $\CR(f)$. The set of periodic points is denoted by $\per(f)$.

Note that compared to non-wandering/chain recurrent points, periodic points have the strongest recurrence property. One would like to ask whether or not the weaker recurrence can be perturbed into stronger ones.
Given $f\in\diff^r(M)$. A point $p\in M$ is called \emph{$C^r$-closable}, if for any $C^r$-neighborhood $\mathcal{U}$ of $f$, there exists $g\in \mathcal{U}$ such that $p\in \per(g)$. 
For uniformly hyperbolic systems, Anosov's shadowing lemma~\cite{Anosov} implies that every chain recurrent point is $C^r$-closable (in fact without perturbation).
For a general diffeomorphism $f$, as has mentioned above, Pugh's closing lemma~\cite{Pugh} verifies that every $x\in\Omega(f)$ is $C^1$-closable and Bonatti-Crovisier's chain closing lemma~\cite{BC-chain-connecting} verifies every $x\in\CR(f)$ is $C^1$-closable. 

For dynamics beyond uniform hyperbolicity and when $r\geq 2$, things turn out to be much more complicated and delicate. 
Gutierrez \cite{Gu} built an example which  showed that the local perturbation method for proving $C^1$-closing lemma does not work in the $C^2$-topology.
Ma\~n\'e~\cite{mane-h-point} proved a $C^2$-connecting lemma which creates homoclinic points for hyperbolic periodic points with some assumption in the measure sense. Recently, a series progress has been achieved in $C^r$-closing lemma for Hamiltonian and conservative surface diffeomorphisms, see \cite{AI,EH,CPZ}.

In higher dimensions, Gan and the first author \cite{GS-closing} proved the $C^r$-closing lemma for partially hyperbolic diffeomorphisms with 1-dimensional center bundle for every $r\in\mathbb{N}_{\geq 2}\cup\{\infty\}$. Inspired by~\cite{GS-closing}, we explore $C^r$-perturbation results for certain class of partially hyperbolic diffeomorphisms.
\medskip

We say that a diffeomorphism $f\in\diff^r(M)$ is \emph{partially hyperbolic} if the tangent space $TM$ admits a $Df$-invariant continuous splitting $TM=E^s\oplus E^c\oplus E^u$ and there exists $k\geq 1$ such that for any $x\in M$, it holds
$$\|Df^k|_{E^s_x}\|<\min\big\{1,m(Df^k|_{E^c_x})\big\}\leq \max\big\{1,\|Df^k|_{E^c_x}\|\big\}<m(Df^k|_{E^u_x}).$$
Here for a  linear operator  $A$, one denotes by $\|A\|$  its norm and by $m(A)$  its conorm,  i.e.
$m(A)=\inf\big\{\|Av\|\colon \|v\|=1\big\}$.

Let $\ph^r(M), r\geq 1$ be the space consisting of all $C^r$-partially hyperbolic diffeomorphisms of $M$ endowed with the $C^r$-topology, then $\ph^r(M)\subset\diff^r(M)$ is an open subset.
We say $f\in\ph^r(M)$ is \emph{dynamically coherent} if there exist two $f$-invariant foliations $\mathcal{F}^{cs}$ and $\mathcal{F}^{cu}$ that are tangent to $E^s\oplus E^c$ and $E^c\oplus E^u$ respectively. It is clear that $\mathcal{F}^c=\mathcal{F}^{cs}\cap \mathcal{F}^{cu}$ is an $f$-invariant foliation tangent to $E^c$.
For a dynamically coherent partially hyperbolic diffeomorphism $f$, one always denotes the  partially hyperbolic splitting by $TM=E_f^s\oplus E_f^c\oplus E_f^u$ or $TM=E^s\oplus E^c\oplus E^u$ for simplicity when there is no confusion. Also, one denotes the stable/center/unstable foliations by $\mathcal{F}^s/\mathcal{F}^c/\mathcal{F}^u$.

%\begin{definition}\label{Def:center-pseudo-orbit}
For $f\in\ph^r(M)$ being dynamically coherent and a constant $\varepsilon>0$, 
an $\varepsilon$-pseudo orbit $\big\{x_i\big\}_{i=a}^b$ is called an \emph{$\varepsilon$-center pseudo orbit}, if $f(x_i)\in\mathcal{F}^c_{\varepsilon}(x_{i+1})$ for any $a\leq i\leq b-1$.
We say $f$ is \emph{$\varepsilon$-plaque expansive} if any two bi-infinite $\varepsilon$-center pseudo orbit $\big\{x_i\big\}_{i\in\mathbb{Z}}, \big\{y_i\big\}_{i\in\mathbb{Z}}$ satisfying 
$$d(x_i,y_i)<\varepsilon \text{~~for every $i\in\mathbb{Z}$,}$$
must satisfy $y_i\in\mathcal{F}^c_{\varepsilon}(x_i)$
for every $i\in\mathbb{Z}$. Here $\mathcal{F}^c_{\varepsilon}(x)$ is the disk  centered at $x$ in $\cF^c(x)$ with radius $\varepsilon$. We say $f$ is \emph{plaque expansive} if it is $\varepsilon$-plaque expansive for some $\varepsilon>0$.
%\end{definition}

\begin{notation}\label{Notation}
	For $r\in\mathbb{N}_{\geq 2}\cup\{\infty\}$, we denote by $\dph_1^r(M)$ {\rm(resp. $\deph_1^r(M)$)} the set of all partially hyperbolic diffeomorphisms $f\in\ph^r(M)$ that satisfies properties {\rm (a) \& (b) (resp. (a), (b) \& (c))}:
	\begin{itemize} 
		\item[\rm (a)] the center bundle $E^c$ is orientable with ${\rm dim}E^c=1$, and $Df$ preserves the orientation;
		\item[\rm (b)] $f$ is dynamically coherent; 
		
		\item[\rm (c)] $f$ is plaque expansive.
	\end{itemize}
\end{notation}

It has been proved \cite[Theorem 7.1\&7.4]{hps} that dynamically coherence plus plaque expansiveness is a robust property for $C^1$-partially hyperbolic diffeomorphisms. Thus $\deph_1^r(M)$ forms an open set in $\diff^r(M)$ for every $r\geq1$. There are a series of partially hyperbolic diffeomorphisms belongs to $\deph_1^r(M)$:
\begin{enumerate}
	\item derived-from-Anosov partially hyperbolic diffeomorphisms on 3-torus $\mathbb{T}^3$ \cite{H1,Po}, where $Df$ preserves the center orientation;
	\item partially hyperbolic diffeomorphisms with circle center fibers on 3-torus $\mathbb{T}^3$ \cite{H1,Po} and 3-nilmanifolds $N$ \cite{HP}, where $Df$ preserves the center orientation;
	\item discretized Anosov flows \cite{Martinchich}, i. e. partially hyperbolic diffeomorphisms which are leaf conjugate to  Anosov flows.
\end{enumerate}
In particular, every dynamically coherent and plaque expansive partially hyperbolic diffeomorphism with 1-dimensional center admits a double cover with $E^c$ orientable and an iteration of the lifting diffeomorphism belongs to $\deph_1^r(M)$.

\begin{remark}
	It has been conjectured \cite{hps} that every dynamically coherent partially hyperbolic diffeomorphism must be plaque expansive. Actually, all known examples of dynamically coherent partially hyperbolic diffeomorphisms are plaque expansive.
\end{remark}

\bigskip

%In this paper, we prove that every $r\in\mathbb{N}_{\geq 2}\cup\{\infty\}$ and every $f\in\dph_1^r(M)$, if a point $y$ is chain attainable from  $x$ through pseudo-orbits, then $y$ is attainable from $x$ through true orbits by arbitrarily small $C^r$-perturbations of $f$.

%
%\begin{theoremalph}\label{Thm:center-chain-connecting}
%	Let $r\in\mathbb{N}_{\geq 2}\cup\{\infty\}$ and $f\in\dph_1^r(M)$. If $x\dashv_f y$, then for any $C^r$-neighborhood $\mathcal{U}$ of $f$, there exists $g\in\mathcal{U}$ such that $x\prec_g y$. 
%\end{theoremalph}

In this paper, we prove that for every $r\in\mathbb{N}_{\geq 2}\cup\{\infty\}$ and every $f\in\dph_1^r(M)$, if a point $y$ is chain attainable from  $x$, then any small neighborhoods of $x$ and $y$ can be connected through true orbits by arbitrarily small $C^r$-perturbations of $f$.
%Theorem~\ref{Thm:center-chain-connecting} follows from the following theorem essentially, which allows us to do $C^r$-perturbations ``almost'' along center directions. 
Let $\mathcal{X}^r(M)$ be the space of $C^r$-vector fields on $M$ for $r\in\mathbb{N}\cup\{\infty\}$. Given  $X\in\mathcal{X}^r(M)$ and $\tau\in\mathbb{R}$, denote by $X_\tau$  the time-$\tau$ map generated by $X$. We say  $X$ is transverse to a co-dimension one bundle $E^s\oplus E^u$ if $X(x)$ is transverse to $E^s_x\oplus E^u_x$ in $T_xM$ for every $x\in M$.

\begin{theoremalph}\label{Thm:main}
	For any $r\in\mathbb{N}_{\geq 2}\cup\{\infty\}$, assume $f\in\dph_1^r(M)$ and  $X\in\mathcal{X}^r(M)$ is a vector field that is transverse to $E^s\oplus E^u$.
	For any two points $x,y\in M$ with $x\dashv_f y$, there exist $p_k\in M$, $\tau_k\in \mathbb{R}$ and $n_k\in \mathbb{N}$ such that
	\begin{itemize}
		\item $\tau_k\rightarrow 0$ as $k\rightarrow\infty$; 
		\item $p_k\rightarrow x$ and $(X_{\tau_k}\circ f)^{n_k}(p_k)\rightarrow y$ as $k\rightarrow \infty$.
	\end{itemize} 
	In particular, if $y=x$, then $(X_{\tau_k}\circ f)^{n_k}(p_k)=p_k$. 
	This means every chain recurrent point of $f\in\dph_1^r(M)$ is $C^r$-closable.
\end{theoremalph}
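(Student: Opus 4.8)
\medskip
\noindent\emph{Sketch of the approach.}
It suffices to establish the following quantitative assertion: for every $\varepsilon>0$ there are $\tau\in\mathbb R$ with $|\tau|<\varepsilon$, an integer $n\ge1$ and a point $p$ with $d(p,x)<\varepsilon$ such that $d\big((X_\tau\circ f)^n(p),y\big)<\varepsilon$, and moreover $(X_\tau\circ f)^n(p)=p$ whenever $y=x$. Taking $\varepsilon=1/k$ then yields the required $\tau_k,n_k,p_k$. I would prove this in three steps: (i) control the perturbed maps $X_\tau\circ f$; (ii) reduce $x\dashv_f y$ to the existence of arbitrarily fine \emph{center} pseudo orbits of $f$; (iii) realize a center pseudo orbit by a true orbit of a suitable $X_\tau\circ f$. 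For (i): since $X\in\mathcal X^r(M)$, one has $X_\tau\to\id$ in the $C^r$ topology as $\tau\to0$, so $g_\tau:=X_\tau\circ f\to f$ in $C^r$ and, $\ph^r(M)$ being open, $g_\tau$ is partially hyperbolic for $|\tau|$ small; I will use the invariant foliations $\cF^s,\cF^c,\cF^u$ of $f$ (dynamical coherence) as a fixed scaffold along which to track $g_\tau$-orbits, the point being that $g_\tau$-images of center plaques of $f$ are $C^1$-close to center plaques of $f$. Transversality of $X$ to $E^s\oplus E^u$ means the $E^c$-component of $X$ is a nowhere vanishing section of the globally oriented line bundle $E^c$, hence has a fixed sign along the center orientation; consequently $X_\tau(f(z))-f(z)$ has an $E^c$-part of size $\asymp|\tau|$ pointing in that fixed direction together with an $E^s\oplus E^u$-part of size $O(|\tau|)$. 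In a word: applying $X_\tau\circ f$ means ``apply $f$, then slide a controlled amount along the center in a fixed direction'', up to a hyperbolic error.

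For step (ii): I claim $x\dashv_f y$ implies that for every $\varepsilon>0$ there is an $\varepsilon$-center pseudo orbit of $f$ from $x$ to a point of $B_\varepsilon(y)$. Indeed, starting from a $\delta$-pseudo orbit with $\delta\ll\varepsilon$, dynamical coherence lets one split each jump into $E^s$, $E^c$ and $E^u$ components; the $E^s$-components are absorbed by forward shadowing inside center-stable leaves (the stable foliation being uniformly contracted), the $E^u$-components by backward shadowing inside center-unstable leaves, and what remains are jumps inside center plaques, i.e.\ an $\varepsilon$-center pseudo orbit. (Alternatively one may skip this reduction and carry the $E^s\oplus E^u$-errors through step (iii), controlling them by hyperbolicity throughout.)

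Step (iii) is the heart of the matter. Given an $\varepsilon$-center pseudo orbit $x=x_0,x_1,\dots,x_n$ of $f$, I will produce $\tau$ with $|\tau|\le C\varepsilon$ and a genuine orbit of $X_\tau\circ f$ from $B_\varepsilon(x)$ to $B_\varepsilon(x_n)$. Choosing orientation-compatible $C^r$ parametrizations of the center plaques along the pseudo orbit, in these one-dimensional coordinates $f$ acts --- up to higher order --- by multiplication by the positive numbers $\lambda_i=\|Df|_{E^c_{f^i(x_0)}}\|$ (positivity being exactly the role of orientation preservation), the $i$-th jump contributes a signed length $\ell_i$ with $|\ell_i|<\varepsilon$, and one application of $X_\tau$ contributes a correction $c_i\tau$ with $c_i$ of a fixed sign, bounded away from $0$ and $\infty$. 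Reading off the center coordinate of $(X_\tau\circ f)^n(p)$, the accumulated effect of the jumps is cancelled by the accumulated effect of the corrections precisely when $\tau$ has the right sign and magnitude $|\tau|\asymp\big|\sum_i\ell_i\Lambda_{i,n}\big|\big/\big|\sum_i c_i\Lambda_{i,n}\big|$, where $\Lambda_{i,n}$ is the center amplification of $f$ from time $i$ to time $n$; this forces $(X_\tau\circ f)^n$, read in center coordinates, to hit the prescribed target. A graph-transform argument then upgrades this to a true orbit in $M$: since $X_\tau\circ f$ is partially hyperbolic and $C^1$-close to $f$, over the tube of center plaques around the pseudo orbit there is a unique point whose $(X_\tau\circ f)$-itinerary follows those plaques; and when $y=x$ an intermediate-value argument in the one-dimensional center promotes the center-level fixed point to a genuine periodic point of $X_\tau\circ f$ in $B_\varepsilon(x)$ --- which is precisely the asserted $C^r$-closability.

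I expect the main obstacle to be the quantitative bound $|\tau|\le C\varepsilon$ in step (iii): it requires both that $\sum_i c_i\Lambda_{i,n}$ not be too small compared with $\sum_i|\ell_i|\Lambda_{i,n}$, and, more seriously, that the nonlinear errors neglected in the linearized bookkeeping above stay negligible uniformly as $\varepsilon\to0$. Were all the $\lambda_k$ below $1$ this could fail, but then $E^s\oplus E^c$ is uniformly contracted along the relevant orbit, the jumps correct themselves with no perturbation at all, and there is nothing to do; the real work is to insert the corrections $c_i\tau$ precisely at the iterates that are afterwards most strongly expanded in the center --- using that a chain-recurrent itinerary must contain sufficiently long center-expanding stretches --- and to make this organization, and hence the conclusion $\tau\to0$ as $\varepsilon\to0$, uniform. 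This is carried out in the spirit of the $C^r$-closing lemma of Gan and the first author \cite{GS-closing}.
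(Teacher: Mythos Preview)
Your outline --- perturb by flowing along $X$, reduce to center pseudo orbits, then correct the center jumps by choosing $\tau$ --- is the paper's strategy, and step~(ii) is exactly Proposition~\ref{Pro:center-pseudo-orbit}. The gap is in your execution of step~(iii).

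The mechanism you propose --- solve for $\tau$ from a linearized balance $|\tau|\asymp\big|\sum_i\ell_i\Lambda_{i,n}\big|\big/\big|\sum_i c_i\Lambda_{i,n}\big|$ and then argue $|\tau|\le C\varepsilon$ --- runs into precisely the obstacle you name, and your suggested fix does not work. The perturbation $X_\tau\circ f$ applies the correction at \emph{every} iterate, so there is no way to ``insert the corrections $c_i\tau$ precisely at the iterates that are afterwards most strongly expanded''; and the assertion that a chain-recurrent itinerary ``must contain sufficiently long center-expanding stretches'' is false in general (a periodic orbit with center multiplier $<1$ is chain recurrent) and in any case irrelevant to arbitrary pairs $x\dashv_f y$. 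More fundamentally, the linearized bookkeeping over $n$ steps, with $n\to\infty$ as $\varepsilon\to0$ and with no a~priori control on the products $\Lambda_{i,n}$, has no uniform error bound: the center dynamics is genuinely nonlinear and this first-order accounting does not close up.

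The paper bypasses all of this with two moves you are missing. First (Lemma~\ref{Lem:ordered-center-chain}), by replacing intermediate points one arranges the center pseudo orbit so that \emph{all} jumps $t_i$ have the same sign; this is a purely combinatorial argument on the lifted line dynamics, using only that $Df$ preserves the center orientation. Second, the paper \emph{reverses your order of quantifiers}: one fixes $\tau=1/k$ first, proves (Proposition~\ref{Prop:center-perturbation}) a uniform lower bound $\Delta_{1/k}>0$ on the center push of one application of $\widetilde X_{1/k}$, and only \emph{then} chooses the pseudo orbit with $\varepsilon<\Delta_{1/k}$. With same-sign jumps of size $<\Delta_{1/k}$, a one-line induction shows the lifted orbit of $\widetilde f_{1/k}$ overshoots $h_{1/k}(0_i)$ at every step, while at $\tau=0$ it undershoots by construction; a single intermediate-value argument in $\tau$ --- on the full nonlinear map, not on any linearization --- then yields $\tau_k\in(0,1/k)$ hitting the target exactly, and the periodic case $x_0=x_n$ falls out for free. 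No balancing of weighted sums, no uniform constant $C$, and no control on center expansion rates is needed.
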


\begin{remark}
	When $r=1$, this orbit connecting lemma was proved by Bonatti and Crovisier in \cite{BC-chain-connecting} for every $f\in\diff^1(M)$. See \cite{C-IHES,C-habitation} for more developments and applications of chain connecting lemmas in $C^1$-topology.
\end{remark}

%As an application of Theorem \ref{Thm:center-chain-connecting}, for $f\in\dph_1^r(M)$ and every $x\in\CR(f)~(x\dashv_f x)$, we can take $g\in\dph_1^r(M)$ arbitrarily $C^r$-close to $f$, such that $x\in\Omega(g)~(x\prec_g x)$.
%Combined with~\cite{GS-closing} which proves every non-wandering point of a partially hyperbolic diffeomorphism with 1-dimensional center bundle is $C^r$-closable, Theorem~\ref{Thm:center-chain-connecting} has the following $C^r$-chain closing lemma for diffeomorphisms in $\dph_1^r(M)$.
%
%\begin{corollaryalph}\label{Thm:chain-closing}
%	For every $r\in\mathbb{N}_{\geq 2}\cup\{\infty\}$ and $f\in\dph_1^r(M)$, every chain recurrent point $x\in\CR(f)$ is $C^r$-closable.
%\end{corollaryalph}

Recall that $f\in\diff^1(M)$ is \emph{transitive} if for any two open sets $U,V\subset M$, there exists $n\geq 1$ such that $f^n(U)\cap V\neq\emptyset$. This is equivalent to $x\prec_f y$ for every $x,y\in M$. Similarly, $f$ is chain transitive if $x\dashv_f y$ for every pair of points $x,y\in M$. It is clearly that transitivity implies chain transitivity. 
Concerning that $\deph_1^r(M)$ forms an open set in $\diff^r(M)$ for every $r\geq1$, we could study generic properties for $\deph_1^r(M)$.
We show that for  $C^r$-generic $f\in\deph_1^r(M)$, transitivity and chain transitivity are equivalent.
Recall that a property is \emph{generic} if it holds for every element in a dense $G_\delta$ subset.

\begin{corollaryalph}\label{Thm:generic}
	For every $r\in\mathbb{N}_{\geq 2}\cup\{\infty\}$, there exists a dense $G_\delta$ subset  $\mathcal{R}\subset \deph_1^r(M)$ such that for any $f\in\mathcal{R}$, the following properties hold.
	\begin{enumerate}
		\item\label{Item:equivalent-of-attainability} $x\dashv_f y$ if and only if $x\prec_f y$.
		\item\label{Item:dense-periodic orbit} Periodic points are dense in the chain recurrent set: $\CR(f)=\Omega(f)=\overline{\per(f)}$.
		\item\label{Item:global-transitive} If $f$ is chain transitive on $M$, then $f$ is transitive on $M$.
	\end{enumerate}
\end{corollaryalph}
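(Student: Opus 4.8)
The plan is to deduce all three items from Theorem~\ref{Thm:main} by a Baire‑category argument in $\deph_1^r(M)$, which is open in the Baire space $\diff^r(M)$ (also for $r=\infty$) and hence is itself Baire; ``generic'' means ``belonging to a dense $G_\delta$ subset of $\deph_1^r(M)$''. For every $f\in\deph_1^r(M)$ the bundle $E^c$ is one–dimensional and orientable, so a $C^\infty$–approximation of a continuous unit section of $E^c$ yields a vector field $X\in\mathcal X^r(M)$ transverse to $E^s\oplus E^u$; thus Theorem~\ref{Thm:main} is available for $f$, and I will only use two of its consequences: (i) if $x\dashv_f y$, then for all neighborhoods $U\ni x$, $V\ni y$ and every $C^r$–neighborhood $\mathcal N$ of $f$ there is $g\in\mathcal N$ (automatically in the open set $\deph_1^r(M)$) with $g^m(U)\cap V\neq\emptyset$ for some $m\ge1$; and (ii), taking $y=x$, every $x\in\CR(f)$ is a limit of periodic points of diffeomorphisms arbitrarily $C^r$–close to $f$.

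For the first residual set, fix a countable basis $\{U_n\}_{n\ge1}$ of $M$ and for $i,j\ge1$ set
$$\mathcal A_{i,j}=\big\{g\in\deph_1^r(M):\ g^m(U_i)\cap U_j=\emptyset\ \text{for every }m\ge1\big\}.$$
Each set $\{g:g^m(U_i)\cap U_j=\emptyset\}$ is closed (if $g_n\to g$ in $C^0$ and $g^m(z)\in U_j$ with $z\in U_i$, then $g_n^m(z)\in U_j$ for large $n$), so $\mathcal A_{i,j}$ is closed, hence its boundary $\partial\mathcal A_{i,j}$ is a closed set with empty interior, i.e. nowhere dense. Therefore $\mathcal R^{(1)}:=\bigcap_{i,j}\big(\deph_1^r(M)\setminus\partial\mathcal A_{i,j}\big)$ is a dense $G_\delta$. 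I claim that for $f\in\mathcal R^{(1)}$, $x\dashv_f y$ implies $x\prec_f y$: if not, choose basic sets $U_i\ni x$ and $U_j\ni y$ with $f^m(U_i)\cap U_j=\emptyset$ for all $m$, so $f\in\mathcal A_{i,j}$; since $f\notin\partial\mathcal A_{i,j}$, in fact $f\in\operatorname{int}\mathcal A_{i,j}$, so some $C^r$–neighborhood of $f$ lies in $\mathcal A_{i,j}$ — contradicting consequence (i), which produces $g$ in that neighborhood with $g^m(U_i)\cap U_j\neq\emptyset$. The reverse implication $x\prec_f y\Rightarrow x\dashv_f y$ is immediate (a true orbit segment is a $\delta$–pseudo orbit for every $\delta$, after two endpoint jumps). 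Thus item~(\ref{Item:equivalent-of-attainability}) holds on $\mathcal R^{(1)}$; item~(\ref{Item:global-transitive}) is the special case ``$x\dashv_f y$ for all $x,y$''; and the case $y=x$ gives $\CR(f)\subseteq\Omega(f)$, hence $\Omega(f)=\CR(f)$ (the reverse inclusion is automatic).

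For the second residual set, it remains to obtain $\overline{\per(f)}=\Omega(f)$ generically; as $\overline{\per(f)}\subseteq\Omega(f)=\CR(f)$ already holds on $\mathcal R^{(1)}$, I only need $\CR(f)\subseteq\overline{\per(f)}$. Let $\per_h(g)$ denote the hyperbolic periodic points of $g$; the map $g\mapsto\overline{\per_h(g)}$ into the space of compact subsets of $M$ with the Hausdorff metric is lower semicontinuous, because hyperbolic periodic orbits persist under $C^r$–perturbation, hence it is continuous on a dense $G_\delta$ set $\mathcal R^{(2)}\subseteq\deph_1^r(M)$. Fix $f\in\mathcal R^{(1)}\cap\mathcal R^{(2)}$ and suppose for contradiction that some $x\in\CR(f)\setminus\overline{\per(f)}\subseteq\CR(f)\setminus\overline{\per_h(f)}$; pick an open $V\ni x$ with $\overline V\cap\overline{\per_h(f)}=\emptyset$. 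By consequence (ii) there is $g$ arbitrarily $C^r$–close to $f$ with a periodic point in $V$, and a further $C^r$–small perturbation (the standard step from the proof of the Kupka–Smale theorem) makes this orbit hyperbolic while keeping it in $V$ and $g$ close to $f$. This yields $g\to f$ with $\overline{\per_h(g)}\cap V\neq\emptyset$, while $\overline{\per_h(f)}$ is at positive distance from $\overline V$ — contradicting continuity of $\overline{\per_h}$ at $f$. Hence $\CR(f)\subseteq\overline{\per_h(f)}\subseteq\overline{\per(f)}$, so $\CR(f)=\Omega(f)=\overline{\per(f)}$, i.e. item~(\ref{Item:dense-periodic orbit}). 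Taking $\mathcal R=\mathcal R^{(1)}\cap\mathcal R^{(2)}$ finishes the proof.

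There is no real obstacle here: all the difficulty is absorbed into Theorem~\ref{Thm:main}, and the rest is the standard Conley–theoretic packaging in the spirit of Bonatti–Crovisier. Among the bookkeeping steps, the only ones needing a little care are the two semicontinuity facts used above, the elementary observation that the boundary of a closed set is nowhere dense (which replaces any need for upper semicontinuity of the chain relation in the proof of item~(\ref{Item:equivalent-of-attainability})), and the routine remarks that every perturbation stays inside the open class $\deph_1^r(M)$ and that a freshly created periodic orbit can be made hyperbolic by an additional $C^r$–small perturbation.
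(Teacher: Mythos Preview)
Your proof is correct and, for items~(\ref{Item:equivalent-of-attainability}) and~(\ref{Item:global-transitive}), essentially identical to the paper's: your closed sets $\mathcal A_{i,j}$ and their boundaries are exactly the complement of the paper's open sets $\mathcal H_{n,m}\cup\mathcal N_{n,m}$ (since $\mathcal A_{i,j}$ is closed, $\operatorname{int}(\mathcal A_{i,j}^c)=\mathcal A_{i,j}^c$, so $\mathcal H_{i,j}\cup\mathcal N_{i,j}=\deph_1^r(M)\setminus\partial\mathcal A_{i,j}$), and the contradiction via Theorem~\ref{Thm:main} is the same.

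The one genuine difference is in item~(\ref{Item:dense-periodic orbit}). The paper simply invokes \cite[Theorem~A]{GS-closing} to obtain a residual set $\mathcal R_0$ on which $\Omega(f)=\overline{\per(f)}$, and then combines this with $\CR(f)=\Omega(f)$ from item~(\ref{Item:equivalent-of-attainability}). You instead re-derive this from scratch: lower semicontinuity of $g\mapsto\overline{\per_h(g)}$ gives a residual set of continuity points, and on that set the $C^r$-closing statement in Theorem~\ref{Thm:main} (plus a Kupka--Smale perturbation to hyperbolize the created orbit) forces $\CR(f)\subset\overline{\per_h(f)}$. Your route is more self-contained within the present paper, at the cost of one extra standard ingredient (the $C^r$ Kupka--Smale step); the paper's route is shorter because the needed generic statement is already available in \cite{GS-closing}. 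Both are valid and the underlying mechanism---closing lemma plus Baire category---is the same.
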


\begin{remark}
	Transitive dynamics that are beyond uniform hyperbolicity have attracted much interest from dynaminists, for instance~\cite{BD-robust-transitive-diffeo,Br}.
	There exist chain transitive partially hyperbolic diffeomorphisms with 1-dimensional center bundle which are accessible but not transitive \cite{GS-pams}. It is worthy to see whether Corollary \ref{Thm:generic} holds for $C^r$-generic partially hyperbolic diffeomorphisms with 1-dimensional center bundle.
\end{remark}

\subsection*{Acknowledgments}
We are very grateful to the anonymous referees for their valuable suggestions.

\section{Lifting dynamics and leaf conjugacy}\label{Section:preparations}
As have been mentioned, our proof is inspired by~\cite{GS-closing}.% from where some preparations are borrowed from and then are adapted to the space $\dph_1^r(M)$ that we focus on.  
This section devotes to state the necessary preliminaries, some of which are borrowed from~\cite{GS-closing}. We first give an outline.

\begin{itemize}
	\item Firstly, we give a Lipschitz center shadowing property for dynamically coherent partially hyperbolic diffeomorphisms proved in~\cite{KT-center shadowing} which insures that one can always consider center pseudo orbits in our setting.
	
	\item Secondly, we parameterize each center leaf through the real line $\mathbb{R}$ and associate it with an order from $\mathbb{R}$ by the fact that $\dim(E^c)=1$. Then one could lift the center dynamics to $\bigsqcup\limits_{i\in\mathbb{Z}}\mathbb{R}_i$ where each $\mathbb{R}_i=\mathbb{R}$. Moreover, we prove that by replacing some intermediate points of a center pseudo orbit if necessary, one could always assume the center pseudo orbit jumps in an ordered way.  This will be done in Section~\ref{Section:lifting-dynamics}.
	
	\item Thirdly, we extend the lifting dynamics on $\bigsqcup\limits_{i\in\mathbb{Z}}\mathbb{R}_i$ to a tubular neighborhood by pulling back a $C^\infty$-bundle $F^s\oplus F^u$ which is $C^0$-close to $E^s\oplus E^u$. Theorem~\ref{Thm:Lip-shadowing} guarantees that the center curves for the dynamics in the tubular neighborhood admits a Lipschitz shadowing property under perturbations. This appears in Section~\ref{Seciton:lifting-bundle} and~\ref{Section:Lipschtz-shadowing}.
	
	\item Finally, based on the preparations in this section, we will prove a push-forward (or push-backward) perturbation result in Section~\ref{Section:proof-main} which allows us to prove Theorem~\ref{Thm:main} by more delicate estimates.
\end{itemize}

%Our first step is to prove that for a dynamical coherent  $f\in\ph^1(M)$ and for every $x\in\CR(f)$, there always exist center pseudo orbits connecting $x$ to itself with arbitrarily small jumps (Corollary~\ref{Cor:periodic-center-shadowing} below).
%The proof is based on a Lipschitz shadowing center property proved in~\cite[Theorem 1]{KT-center shadowing} for dynamical coherent $f\in\ph^1(M)$, see also similar results in~\cite{Hu-Zhou-Zhu}.

One first recalls the following proposition from~\cite[Theorem 1]{KT-center shadowing} which states a Lipschitz center shadowing property for dynamically coherent partially hyperbolic diffeomorphisms. Roughly speaking, for such a system, any pseudo orbit with small jumps would be shadowed by a center pseudo orbit.
See also similar results in~\cite{Hu-Zhou-Zhu}.
\begin{proposition}[\cite{KT-center shadowing}]\label{Pro:center-pseudo-orbit}
	Assume $f\in\ph^1(M)$ is dynamically coherent.
	% and plaque expansive with  $\dim(E^c)=1$. 
	Then there exist $\varepsilon_0>0$ and $L>1$ such that for any $\varepsilon\in (0,\varepsilon_0)$ and any $\varepsilon$-pseudo orbit $\big\{w_a,w_{a+1},\cdots,w_b\big\}, -\infty\leq a<b\leq +\infty$, there exists an $L\varepsilon$-center pseudo orbit $\big\{x_a,x_{a+1},\cdots,x_b\big\}$ satisfying 
	$$d(x_i,w_i)<L\varepsilon ~~\text{for any $a\leq i\leq b$}.$$
    Moreover, if %$\big\{w_a,w_{a+1},\cdots,w_b\big\}$ is a periodic pseudo orbit, i.e.
     $w_a=w_b$ where $-\infty<a<b<+\infty$, then the new center pseudo orbit can be chosen to satisfy $x_a=x_b$.
\end{proposition}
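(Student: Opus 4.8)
The plan is to deduce this from the local product structure furnished by dynamical coherence together with partial hyperbolicity, and then to straighten a given pseudo orbit in two successive stages — first along unstable leaves, then along stable leaves — each stage being solved by a uniform contraction mapping in a space of bounded sequences. First I would fix a scale $\rho_0>0$ so small that whenever $d(p,q)<\rho_0$ the local leaves $\mathcal{F}^s_{loc},\mathcal{F}^c_{loc},\mathcal{F}^u_{loc},\mathcal{F}^{cs}_{loc},\mathcal{F}^{cu}_{loc}$ through $p$ and $q$ meet transversally in the expected way: in particular $\mathcal{F}^{cs}_{loc}(p)\cap\mathcal{F}^u_{loc}(q)$ is a single point depending continuously on $(p,q)$, and inside each center-stable leaf the pair $(\mathcal{F}^c_{loc},\mathcal{F}^s_{loc})$ again gives a product chart. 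After replacing the Riemannian metric by an adapted one I may take $k=1$ in the definition of partial hyperbolicity, so that $Df^{-1}$ contracts $E^u$ with a uniform factor $\lambda<1$ and $Df$ contracts $E^s$ with a uniform factor $\mu<1$; moreover the holonomy maps between nearby local leaves have uniformly bounded distortion. All constants below will be controlled by $\rho_0$, $\lambda$, $\mu$ and $\|Df\|$, hence independent of $\varepsilon$ and of the length of the pseudo orbit.

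Stage one (killing the unstable discrepancy). Given an $\varepsilon$-pseudo orbit $\{w_i\}_{i=a}^b$, I would search for points $y_i\in\mathcal{F}^u_{loc}(w_i)$, described by an unstable coordinate $u_i$, so that $f(y_i)$ and $y_{i+1}$ lie on a common center-stable leaf. Projecting to the unstable fibre, this requirement becomes a recursion $u_{i+1}=\Lambda_i u_i-e_i$, where $\Lambda_i$ is a small holonomy-perturbation of $Df|_{E^u}$, hence $\|\Lambda_i^{-1}\|\le\lambda<1$, and $\|e_i\|\le C\varepsilon$ records the unstable part of the jump $d(f(w_i),w_{i+1})$. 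Rewriting it as $u_i=\Lambda_i^{-1}(u_{i+1}+e_i)$ exhibits a uniform contraction (factor $\lambda$) on $\ell^\infty$ of the index set, acting backwards in $i$; its unique fixed point satisfies $\|u_i\|\le C\varepsilon/(1-\lambda)$, so the resulting $y_i$ obey $d(y_i,w_i)\le C'\varepsilon$ and $f(y_i)\in\mathcal{F}^{cs}_{loc}(y_{i+1})$ for all $i$. When $b<\infty$ one simply prescribes $y_b=w_b$ as initial datum for the backward recursion.

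Stage two (killing the stable discrepancy). Since $\mathcal{F}^{cs}$ is $f$-invariant, I may now work inside the center-stable foliation and look for $x_i\in\mathcal{F}^s_{loc}(y_i)$, described by a stable coordinate $s_i$, so that $f(x_i)$ and $x_{i+1}$ lie on a common center leaf. By the same device this requirement becomes $s_{i+1}=\Lambda'_i s_i-e'_i$ with $\|\Lambda'_i\|\le\mu<1$ and $\|e'_i\|\le C\varepsilon$ the stable part, within the $cs$-leaf, of the residual jump left from Stage one; this is a uniform contraction acting forwards in $i$, whose fixed point obeys $\|s_i\|\le C\varepsilon/(1-\mu)$, and when $a>-\infty$ one prescribes $x_a=y_a$. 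Then $d(x_i,w_i)\le d(x_i,y_i)+d(y_i,w_i)=O(\varepsilon)$, while $f(x_i)\in\mathcal{F}^c_{loc}(x_{i+1})$ with center-distance $O(\varepsilon)$, so a single constant $L>1$ depending only on the structural data (and $\varepsilon_0$ chosen with $L\varepsilon_0<\rho_0$) makes $\{x_i\}$ an $L\varepsilon$-center pseudo orbit. For the closed case $w_a=w_b$ with $-\infty<a<b<+\infty$ one runs both fixed-point arguments on the subspace of $(b-a)$-periodic sequences, which is still a complete metric space on which the operators act as uniform contractions; the fixed points are therefore periodic, giving $u_a=u_b$, $s_a=s_b$, hence $y_a=y_b$ and $x_a=x_b$.

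The main obstacle I anticipate is organising the two stages so that the estimates close up with one constant $L$ independent of the pseudo-orbit length and of $\varepsilon$. Concretely, one must verify that the holonomy corrections introduced in Stage one do not destroy the center-stable alignment exploited in Stage two (this is where $f$-invariance of $\mathcal{F}^{cs}$ is essential), that the twisted linear maps $\Lambda_i,\Lambda'_i$ are genuine uniform contractions after passing to the adapted metric and have uniformly small deviation from $Df|_{E^u}$, $Df|_{E^s}$, and that every point produced by the two fixed-point schemes stays in the $\rho_0$-regime where the local product structure and these estimates are valid — which is precisely why $\varepsilon_0$ must be taken small compared with $\rho_0/L$.
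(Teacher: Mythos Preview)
Your two-stage contraction-mapping approach is a valid strategy and differs from the route the paper takes (following \cite{KT-center shadowing}). There one defines maps $h^s_i\colon \mathcal{F}^s_{w_i}(L_1\varepsilon)\to\mathcal{F}^s_{w_{i+1}}(L_1\varepsilon)$ by composing $f$ with local $cu$-holonomy, assembles them into a product map $H$ on $\prod_i \mathcal{F}^s_{w_i}(L_1\varepsilon)$, and invokes the Tikhonov--Schauder fixed point theorem to produce an orbit of $cu$-leaves; the symmetric construction gives an orbit of $cs$-leaves, and their intersection yields the center pseudo orbit. For the periodic case the paper applies Schauder to the finite composition $H^s_n=h^s_{n-1}\circ\cdots\circ h^s_0$ acting on the single disk $\mathcal{F}^s_{w_0}(L_1\varepsilon)$, rather than passing to a periodic subspace as you do.

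The trade-off is this: Schauder needs only continuity of the $h^s_i$ and that they map a compact convex disk into itself, so it is insensitive to the regularity of the $cs$/$cu$ holonomies (which in general are only H\"older). Your Banach scheme is more elementary, delivers explicit $O(\varepsilon/(1-\lambda))$ bounds, and gives uniqueness of the shadowing sequence for free---but it requires the nonlinear transition maps you call $\Lambda_i,\Lambda_i'$ to be genuine Lipschitz contractions. This is not automatic if you parametrise literally by the invariant leaves $\mathcal{F}^u_{loc}(w_i)$, $\mathcal{F}^s_{loc}(y_i)$ and project by the dynamical foliations; it becomes clean only after you pass to exponential charts $T_{w_i}M$ with the linear splitting $E^s\oplus E^c\oplus E^u$, where the transition map is $Df(w_i)$ plus a $C^1$-small remainder. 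You flag exactly this as your main obstacle, so the proposal is sound, but be aware that the paper's topological fixed-point argument avoids the issue entirely. Your handling of the periodic case via the closed subspace of $(b-a)$-periodic sequences is arguably tidier than the paper's single-disk reduction.
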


\begin{proof}[Proof of the ``moreover'' part]
	
	The statement of Proposition~\ref{Pro:center-pseudo-orbit} before the ``moreover'' part is~\cite[Theorem 1]{KT-center shadowing}. Thus we only need to prove the ``moreover'' part.
	We first revisit the steps in the proof of~\cite[Theorem 1]{KT-center shadowing}. Note that one could always consider a bi-infinite pseudo orbit $\{w_i\}_{i\in\mathbb{Z}}$ because any finite or one-sided-infinite pseudo orbit can be extended naturally to a bi-infinite one (see for instance Section~\ref{Section:lifting-dynamics} below).
	\begin{itemize}
		\item [1.] Fix two constants $\delta_0>0, L_0>1$ such that for any $0<\delta\le\delta_0$ and any two points $x,y\in M$ satisfying $d(x,y)<\delta$, then  $\mathcal{F}_x^{cu}(L_0\delta)$ intersects $\mathcal{F}_y^s(L_0\delta)$ at a unique point and similarly for $\mathcal{F}_x^{cs}(L_0\delta)$ and $\mathcal{F}_y^u(L_0\delta)$. %This is~\cite[Statement 1]{KT-center shadowing}.
		This local product property is guaranteed by the transversality and continuation of local foliations.
		
		\item [2.] By considering an iterate of $f$~\cite[Lemma 1]{KT-center shadowing} (or by taking an adapted metric as in Section~\ref{Section:preparations}) and by decreasing $\delta_0$ if necessary, one has 
		\[d^s(f(x),f(y))<\lambda d^s(x,y), \forall y\in\mathcal{F}^s_x(\delta_0),\forall x\in M,\]
		\[d^u(f^{-1}(x),f^{-1}(y))<\lambda d^s(x,y), \forall y\in\mathcal{F}^u_x(\delta_0),\forall x\in M,\]
		where $d^{s/u}$ denotes the metric on the stable/unstable foliations and $\lambda\in(0,1)$ is a constant depending only on the contracting/expanding ratio of $Df$ along $E^s/E^u$.
		
		\item[3.] Then there exist two constants $\varepsilon_0\in(0,\delta_0)$ and $L_1>L_0$ such that for any $\varepsilon\in(0,\varepsilon_0)$ and  any bi-infinite $\varepsilon$-pseudo orbit $\{w_i\}_{i\in\mathbb{Z}}$, the  map $h_i^s\colon \mathcal{F}^s_{w_i}(L_1\varepsilon)\rightarrow \mathcal{F}^s_{w_{i+1}}(L_1\varepsilon)$ is well defined where for every $z\in\mathcal{F}^s_{w_i}(L_1\varepsilon)$, the image $h_i^s(z)$   is the unique point of $\mathcal{F}^{cu}_{f(z)}(L_0\delta_0)\cap \mathcal{F}^s_{w_{i+1}}(L_1\varepsilon)$. This corresponds to~\cite[Lemma 2]{KT-center shadowing}.

		\item[4.] Consider the set $X^s=\prod_{i=-\infty}^{+\infty}\mathcal{F}^s_{w_i}(L_1\varepsilon)$ endowed with the Tikhonov product topology and consider the map $H\colon X^s\rightarrow X^s$ with 
		\[H(\{z_i\})=\{z'_{i+1}\},~~\text{where~~} z'_{i+1}=h^s_i(z_i).\footnote{A bit difference with~\cite{KT-center shadowing} is that they are working on the tangent bundle while here we consider local leaves on the manifold directly. These two ways are equivalent through the exponential map.}\]
		Applying the Tikhonov-Schauder fixed point theorem, the map $H$ has a (maybe not unique) fixed point. This induces an orbit  of center unstable leaves $\big\{\mathcal{F}^{cu}_{y^s_i}\big\}_{i\in\mathbb{Z}}$ $\big($i.e. $f\big(\mathcal{F}^{cu}_{y^s_i}\big)=\mathcal{F}^{cu}_{y^s_{i+1}}$ for any  $i\in\mathbb{Z}$ $\big)$ where $y^s_{i+1}\in \mathcal{F}^{cu}_{f(y^s_i)}(L_1\varepsilon)$ and
		which satisfies that $y^s_{i}$ is close to $w_i,\forall i\in\mathbb{Z}$.
		Symmetrically, one obtains an orbit  of center stable leaves $\big\{\mathcal{F}^{cs}_{y^u_i}\big\}_{i\in\mathbb{Z}}$ which satisfies that $y^u_{i}$ is close to $w_i,\forall i\in\mathbb{Z}$. 
		
		\item[5.] The intersection $\big\{\mathcal{F}^{cs}_{y^u_i}\cap\mathcal{F}^{cu}_{y^s_i}\big\}_{i\in\mathbb{Z}}$ forms an  orbit of center leaves $\big\{\mathcal{F}^{c}_{y_i}\big\}_{i\in\mathbb{Z}}$ with $y_i$ being close to $w_i$ for all $i\in\mathbb{Z}$. Finally, following arguments in~\cite[Page 2907-2908]{KT-center shadowing}, there exists a  center pseudo orbit $\big\{x_i\big\}_{i\in\mathbb{Z}}$  that shadows $\big\{w_i\big\}_{i\in\mathbb{Z}}$. In this step, the constant $\varepsilon_0$ maybe decreasing once again and $L_1$ is replaced by a larger $L$.
	\end{itemize}

	The ``moreover'' part is almost included in the proof of~\cite[Theorem 1]{KT-center shadowing}. We give an explanation here.  To simplify symbols, we let $0=a<b=n<+\infty$ and consider a  pseudo orbit $\big\{w_0,w_{1},\cdots,w_n\big\}$ with $w_n=w_0$. One first repeats it forward and backward to get a bi-infinite  periodic pseudo orbit, i.e. let $w_i=w_{i~{\rm mod}~n}$ for $i\in\mathbb{Z}$. Then Steps 1,2,3 still work. We only need to verify that in Step 4, one could choose a fixed point  of $H$ which induces a periodic orbit of center unstable leaves  $\big\{\mathcal{F}^{cu}_{y^s_i}\big\}_{i\in\mathbb{Z}}$, i.e. $y^s_i=y^s_{i~{\rm mod}~n}$. 
	To do this, we  consider the finite composition (rather than considering $H$)
	\[ H^s_n= h^s_{n-1} \cdots h^s_1\circ h^s_0\colon \mathcal{F}^s_{w_0}(L_1\varepsilon)\rightarrow \mathcal{F}^s_{w_{n}}(L_1\varepsilon)=\mathcal{F}^s_{w_0}(L_1\varepsilon),\]
	which is well defined by Step 3.
	Note that each $h^s_i$ is a composition of a local $cu$-holonomy with $f$, thus $H^s_n$ is continuous. On the other hand, $H^s_n$ maps $\mathcal{F}^s_{w_0}(L_1\varepsilon)$ into itself where $\mathcal{F}^s_{w_0}(L_1\varepsilon)$ is an $s$-dimensional disk.
	Applying the Tikhonov-Schauder fixed point theorem, the map $H^s_n$ has a (maybe not unique) fixed point $y^s_0\in \mathcal{F}^s_{w_0}(L_1\varepsilon)$. This induces an orbit segment of center unstable leaves  $\big\{\mathcal{F}^{cu}_{y^s_i}\big\}_{i\in[0,n]}$ with $f\big(\mathcal{F}^{cu}_{y^s_i}\big)=\mathcal{F}^{cu}_{y^s_{i+1}}$  for $0\leq i\leq n-1$ and $y^s_0=y^s_n$. Moreover, one also has that $y^s_{i}$ is close to $w_i,\forall i\in[0,n]$ following arguments in~\cite[Page 2907-2908]{KT-center shadowing}.
	Repeating $\big\{\mathcal{F}^{cu}_{y^s_i}\big\}_{i\in[0,n]}$ forward and backward, i.e. let $y^s_i=y^s_{i~{\rm mod}~n}$ for $i\in\mathbb{Z}$, one gets a periodic orbit of center unstable leaves 
	$\big\{\mathcal{F}^{cu}_{y^s_i}\big\}_{i\in\mathbb{Z}}$. Symmetrically we can get a periodic orbit of center stable leaves 
	$\big\{\mathcal{F}^{cs}_{y^u_i}\big\}_{i\in\mathbb{Z}}$. As in Step 5, the intersection $\big\{\mathcal{F}^{cs}_{y^u_i}\cap\mathcal{F}^{cu}_{y^s_i}\big\}_{i\in\mathbb{Z}}$ forms a periodic  orbit of center leaves $\big\{\mathcal{F}^{c}_{y_i}\big\}_{i\in\mathbb{Z}}$ with $y_i$ being close to $w_i$ for all $i\in[0,n]$ and $y_i=y_{i~{\rm mod}~n}$. We take $x_i$ to be the unique intersection of local leaf of $\mathcal{F}^{cu}_{y^s_i}$  with the local leaf of $\mathcal{F}^{s}_{y^u_i}$.  Here $\{x_i\}_{i=0}^n$ is the periodic $L\epsilon$-center pseudo orbit we need, see \cite[Page 2908]{KT-center shadowing} for the estimation of the constant $L$ \footnote{This is the first line of ~\cite[Page 2908]{KT-center shadowing} where they use the notation ``$y_k$'' while here we use ``$x_i$''.}.
\end{proof}

\begin{remark}\label{Rem:center-pseudo-orbit}	
	As a consequence of Proposition~\ref{Pro:center-pseudo-orbit}, for a  dynamically coherent $f\in\ph^1(M)$, for any $x\in\CR(f)$ and any $\varepsilon>0$, there exists a periodic $\varepsilon$-center pseudo orbit 
	\[\big\{x_0,x_1,\cdots,x_n=x_0\big\} \text{~~satisfying~~}	d(x_0,x)<\varepsilon.\]	
\end{remark}

\subsection{Preliminary settings}\label{Section:setting}

From now on, unless emphasized, we always assume $f\in\dph_1^r(M), r\in\mathbb{N}_{\geq 2}\cup\{\infty\}$. 
In this subsection, we will fix some elements for $f\in\dph_1^r(M), r\in\mathbb{N}_{\geq 2}\cup\{\infty\}$. The first is a metric on $M$ that is adapted to the partially hyperbolic splitting. Secondly, one associates an order to each ($1$-dimensional) center bundle. Finally, one fixes two $C^\infty$-sub-bundles $F^s$ and $F^u$ that are $C^0$-close to $E^s$ and $E^u$ respectively.

Let $d$ be a $C^{\infty}$-Riemannian metric on $M$ that is adapted~\cite{gourmelon} to  the splitting $E^s\oplus E^c\oplus E^u$. To be precise, there exist  constants $0<\lambda<1$ and $0<\eta_0<10^{-3}$ such that for any $z\in M$, it satisfies:
\begin{itemize}
	\item $\|Df|_{E^s_z}\|<\min\big\{\lambda,\|Df|_{E^c_z}\|\big\}\leq \max\big\{\lambda^{-1},\|Df|_{E^c_z}\|\big\}<m(Df|_{E^u_z})$;
	\item the three sub-bundles are almost orthogonal mutually:
	  $$\min\limits_{v^{\alpha}\in E^{\alpha}_z,\|v^{\alpha}\|=1} d_{T_zM}(v^{\alpha},E^{\beta}_z\oplus E^{\gamma}_z)>1-\eta_0, \text{~~where~~} \big\{\alpha,\beta,\gamma\big\}=\big\{s,c,u\big\};$$
	\item for $\alpha=s,c,u$, the projection $\pi_z^{\alpha}\colon T_zM\rightarrow E^{\alpha}_z$ satisfies $\|\pi_z^{\alpha}(v)\|\leq 2\|v\|$ for any $v\in T_zM$.
\end{itemize}
Here $\|\cdot\|$ is the vector norm on $TM$ and $d_{T_zM}(\cdot,\cdot)$ is the distance induced by the Riemannian metric $d$. Such an adapted Riemmanian metric $d$ always exists by the classical arguments in~\cite{hps}.

Denote by $\pi\colon TM\rightarrow M$ the natural projection from $TM$ to $M$ and by $\pi^\alpha \colon TM\rightarrow E^\alpha$ the bundle projection map. For $v\in TM$, let $v=v^s+v^c+v^u$ be the direct sum splitting with $v^\alpha=\pi^\alpha(v)$, $\alpha=s,c,u$. 
To simplify notations, denote $s=\dim(E^s)$ and $u=\dim(E^u)$. 
For any constant $a>0$, the $a$-cone field $\mathcal{C}_a(E^c)$ of $E^c$ is defined as
$$\mathcal{C}_a(E^c)=\left\{v\in TM \colon \|v^s+v^u\|\leq a\|v^c\|\right\}.$$
A $C^1$-curve $\gamma\colon I\rightarrow M$, where $I$ is an interval, is called tangent to the $a$-cone field of $E^c$ if the tangent line of $\gamma$ is contained in $\mathcal{C}_a(E^c)$ everywhere.

Let $X\in\mathcal{X}^r(M)$ be a vector field that is transverse to $E^s\oplus E^u$ everywhere.
Since $\dim(E^c)=1$, one has that $E^c_z\cong \mathbb{R}$, for any $z\in M$. Moreover, since $E^c$ is orientable and $Df$ preserves the orientation,  one can associate to each $E^c_z$ an order induced by $\mathbb{R}$ such that $Df|_{E^c}$ keeps this order.
Without loss of generality, we assume that $X\in\mathcal{X}^r(M)$  is  positively transverse to $E^s\oplus E^u$ everywhere. To be precise,  it satisfies that $\pi^c(X(z))>0$ with respect to the order of $E^c_z$ for every $z\in M$. The other case when $X$ is negatively transverse to $E^s\oplus E^u$ everywhere is symmetric by taking the inverse orientation of $E^c$. 
%Moreover, there exists a constant $a_0>0$ such that
%$$d_{T_zM}\left(\frac{1}{\|X(z)\|}X(z),E^c_z\right)<a_0,~~\forall z\in M,$$
%Denote by $X_\tau$, where $\tau\in\mathbb{R}$, the time-$\tau$ map generated by the vector field $X$.

Note that in general $E^s$ and $E^u$ are only continuous sub-bundles of $TM$. One takes two $C^\infty$-sub-bundles $F^s$ and $F^u$ of $TM$ that are $\eta_0$-$C^0$-close to $E^s$ and $E^u$ respectively. To be precise,
$$\dim(F^\alpha)=\dim(E^\alpha)
\quad\text{and}\quad
\angle(F^\alpha,E^\alpha)=\max\limits_{z\in M}\max\limits_{v\in F^\alpha_z,\|v\|=1}d_{T_zM}(v,E^\alpha_z)<\eta_0, \quad\text{for}~ \alpha=s,u.$$
Thus $TM=F^s\oplus E^c\oplus F^u$ is still a direct sum decomposition.
In particular, by taking $F^s$ and $F^u$ close enough to $E^s$ and $E^u$ respectively, one can guarantee that $X$ is also transverse to $F^s\oplus F^u$.

\subsection{Parameterizing and lifting center dynamics}\label{Section:lifting-dynamics}
For each $z\in M$, denote by $\mathcal{F}^c_z$  the  center leaf of $z$. 
Based on the fact that $\dim(\mathcal{F}^c_z)=1$, one  parameterizes the center leaves $\big\{\mathcal{F}^c_z\big\}_{z\in M}$ and lift the center dynamics on the parameterization.
Then we prove that every center pseudo orbit   with small jumps could be replaced by one that jumps in an ordered way and that preserves the initial and end points, see Lemma~\ref{Lem:ordered-center-chain}.

\paragraph{Parameterization of the center foliation} %Let $f\in\ph^r(M), r\in\mathbb{N}_{\geq 2}$ be dynamically coherent with $\dim(E^c)=1$ and assume that $E^c$ is orientable and $Df$ preserves the orientation. One fixes an orientation of $E^c$. 

Note that $\mathcal{F}^c_z$ either is non-compact and diffeomorphic to $\mathbb{R}$, or is compact and diffeomorphic to $\mathbb{S}^1$. Let $\theta_z\colon \mathbb{R}\rightarrow \mathcal{F}^c_z$ be a $C^1$ map such that 
\begin{itemize}
	\item $\theta_z(0)=z$;
	\item $\theta_z'(t)$ is the positive unit vector of $E^c_{\theta_z(t)}$, for all $t\in\mathbb{R}$.
\end{itemize}
To emphasize the dependence of the base point $z$, we denote by $\mathbb{R}_z=\theta_z^{-1}(\mathcal{F}^c_z)$.
Note that when $\mathcal{F}^c_z$ is non-compact, then $\theta_z$ is a diffeomorphism and is isometric in the sense that $d^c(\theta_z(t_1),\theta_z(t_2))=|t_2-t_1|$ where $d^c(\cdot,\cdot)$ is the metric on the center foliations. 
When $\mathcal{F}^c_z$ is compact, then $\theta_z$ is a universal covering map and is locally isometric.

\paragraph{The constant $\varepsilon_0$}
Since the length of all (compact) center leaves has a uniform lower bound, one can take a small constant $0<\varepsilon_0<1$ such that for every $z\in M$ and every interval $I$ with length no larger than $10\varepsilon_0$, the restricted map $\theta_z|_{I}$ is isometric. 

%\paragraph{Lifting dynamics} Note that $\{\theta_z\}$ defines a line-fiber over $M$. To emphasize the dependence of the base point $z$, we denote by $\mathbb{R}_z=\theta_z^{-1}(\mathcal{F}^c(z))$. The dynamics $f\colon M\rightarrow M$ induces a natural dynamics  $\displaystyle \theta^*f\colon \bigsqcup\limits_{z\in M}\mathbb{R}_z\rightarrow \bigsqcup\limits_{z\in M}\mathbb{R}_z$ such that for $\forall z\in M$, the restriction $\theta^*f|_{\mathbb{R}_z}\colon \mathbb{R}_z\rightarrow \mathbb{R}_{f(z)}$ is a diffeomorphism and   for $\forall t\in\mathbb{R}_z,$ it satisfies
%$$\theta^*f(0_z)=0_{f(z)},~~~~~~~~~~\theta_{f(z)}\circ \theta^*f(t)=f\circ\theta_z(t).$$
%Since $Df$ preserves the orientation of $E^c$, each $\theta^*f|_{\mathbb{R}_z}$ is strictly increasing.

%\begin{remark*}
%	When $\mathcal{F}^c(z)$ is non-compact, so as $\mathcal{F}^c(f(z))$, then $\theta_z$ as well as $\theta_{f(z)}$ is a diffeomorphism, and $\theta^*f|_{\mathbb{R}_z}=\theta_{f(z)}^{-1}\circ f\circ\theta_z$. When $\mathcal{F}^c(z)$  is compact, so as $\mathcal{F}^c(f(z))$, then $\theta_z$ as well as $\theta_{f(z)}$ can be seen as a universal covering map of $\mathbb{S}^1$ and $f\colon \mathcal{F}^c(z)\rightarrow \mathcal{F}^c(f(z))$ can be seen as a diffeomorphism on $\mathbb{S}^1$, thus one can take $\theta^*f\colon\mathbb{R}_z\rightarrow \mathbb{R}_{f(z)}$ to be a lifting map of $f|_{\mathcal{F}^c(z)}$.
%\end{remark*}

\medskip

Take $\varepsilon\in(0,\varepsilon_0)$. Assume 
$$\Gamma=\big\{x_0,x_1,\cdots,x_n\big\}$$ 
is an $\varepsilon$-center pseudo orbit. 
One extends $\Gamma$ to be bi-infinite. To be precise, let
$$\displaystyle\Gamma_\infty=\big\{x_i\big\}_{i=-\infty}^{+\infty}=\big\{\cdots,x_{-1},x_0,x_1,\cdots,x_n,x_{n+1},\cdots\big\}$$ 
such that 
$$x_i=f^i(x_0) \text{~when~} i\leq -1 
\quad\text{and}\quad
 x_i=f^{i-n}(x_n) \text{~when~} i\geq n+1.$$
To simplify notations, denote $\mathbb{R}_{i}=\mathbb{R}_{x_i}$ and $\theta_i=\theta_{x_i}\colon \mathbb{R}_{i}\rightarrow \mathcal{F}^c_{x_i}$. 
Since $f(x_i)\in \mathcal{F}^c_{x_{i+1}}$ and each $\theta_i$ is locally isometric, then there exists a unique $t_{i+1}\in(-\varepsilon,\varepsilon)$ such that $f(x_i)=\theta_{i+1}(t_{i+1})$. In particular, one has $t_{i+1}=0$ when $i\leq -1$ or $i\geq n$.

\paragraph{Lifting dynamics} Note that $\big\{\theta_z\big\}_{z\in M}$ defines a line-fiber over $M$. 
The dynamical system  $f\colon M\rightarrow M$ induces a natural dynamics  $\displaystyle \widehat{f}\colon \bigsqcup\limits_{i\in\mathbb{Z}}\mathbb{R}_i\rightarrow \bigsqcup\limits_{i\in\mathbb{Z}}\mathbb{R}_i$ such that for $\forall i\in\mathbb{Z}$, the restriction $\widehat{f}|_{\mathbb{R}_i}\colon \mathbb{R}_i\rightarrow \mathbb{R}_{i+1}$ is a diffeomorphism and   for $\forall t\in\mathbb{R}_i,$ it satisfies
	$$\theta_{i+1}\circ \widehat{f}(t)=f\circ\theta_i(t)~~~~~\text{and }~~~|\widehat{f}(0_i)|<\varepsilon.$$
That is to say the following diagram commutes:
%\textcolor{red}{  The dynamics $f\colon M\rightarrow M$ induces a natural dynamics  $\displaystyle \widehat{f}\colon \bigsqcup\limits_{i=0}^{n-1}\mathbb{R}_i\rightarrow \bigsqcup\limits_{i=1}^{n}\mathbb{R}_i$ such that for $\forall i\in[0,n-1]$, the restriction $\widehat{f}|_{\mathbb{R}_i}\colon \mathbb{R}_i\rightarrow \mathbb{R}_{i+1}$ is a diffeomorphism and   for $\forall t\in\mathbb{R}_i,$ it satisfies
%$$\theta_{i+1}\circ \widehat f(t)=f\circ\theta_i(t)~~~~~\text{and }~~~|\widehat f(0_i)|<\varepsilon.$$
%That is to say the following diagram commutes:}
\begin{displaymath}
	\xymatrix{ \bigsqcup\limits_{i\in\mathbb{Z}}\mathbb{R}_i \ar[r]^{\widehat{f}} \ar[d]_{\theta} & \bigsqcup\limits_{i\in\mathbb{Z}}\mathbb{R}_{i} \ar[d]^{\theta} \\
			M  \ar[r]^f& M}
		%\bigcup\limits_{i=0}^{n-1}\mathcal{F}^c_{x_i} \ar[r]^f& \bigcup\limits_{i=1}^{n}\mathcal{F}^c_{x_{i+1}} }
\end{displaymath}
where $\theta|_{\mathbb{R}_i}=\theta_i$ for $i\in\mathbb{Z}$.
Since $Df$ preserves the orientation of $E^c$, each $\widehat{f}|_{\mathbb{R}_z}$ is strictly increasing.
In particular, one has that $\widehat f(0_i)=t_{i+1}$ for every $i\in\mathbb{Z}$.
Moreover, the map $\theta\colon \bigsqcup\limits_{i\in\mathbb{Z}}\mathbb{R}_i\rightarrow M$ is a normally hyperbolic leaf immersion following~\cite[Page 69]{hps}, see also~\cite[Proposition 4.4]{GS-closing}.

\begin{remark}
	When $\mathcal{F}^c_{x_0}$ is non-compact, so as $\mathcal{F}^c_{x_i}$ for all $i\in\mathbb{Z}$, then $\theta_i$  is a diffeomorphism, and $\widehat{f}|_{\mathbb{R}_i}\colon\mathbb{R}_i\rightarrow \mathbb{R}_{i+1}$ is defined as $\widehat{f}|_{\mathbb{R}_i}=\theta_{i+1}^{-1}\circ f\circ\theta_i$. When otherwise $\mathcal{F}^c_{x_0}$  is compact, so as $\mathcal{F}^c_{x_i}$ for all $i\in\mathbb{Z}$, then each $\theta_i$ can be seen as a universal covering map of $\mathbb{S}^1$ and $f\colon \mathcal{F}^c_{x_i}\rightarrow \mathcal{F}^c_{x_{i+1}}$ can be seen as a circle diffeomorphism, thus one can take $\widehat{f}|_{\mathbb{R}_i}\colon\mathbb{R}_i\rightarrow \mathbb{R}_{i+1}$ to be the unique lifting map of $f|_{\mathcal{F}^c_{x_i}}$ that satisfies $|\widehat{f}(0_i)|<\varepsilon$. Moreover, in both cases, $\widehat f(0_i)=0_{i+1}$ whenever $i\leq -1$ or $i\geq n$.
\end{remark}

\begin{remark}
	The reason why we extend the center pseudo-orbit $\Gamma$ to the bi-infinite one $\Gamma_\infty$ is that in this way one obtains an invariant set $\bigsqcup\limits_{i\in\mathbb{Z}}\mathbb{R}_i$ of the lifting  system $\widehat{f}$. On the other hand, one could  lift the center dynamics as above over a general bi-infinite pseudo-orbit, but we will concentrate on  the finite part $\Gamma$ in the following.
\end{remark}

Recall that $f(x_i)=\theta_{i+1}(t_{i+1})$ which is equivalent to say $\widehat f(0_i)=t_{i+1}$, with $|t_{i+1}|<\varepsilon, \forall i\in\mathbb{Z}$ and in particular $t_{i+1}=0$ when $i\leq -1$ or $i\geq n$.
The following lemma states that replacing some points if necessary, the center pseudo orbit $\Gamma$ can be  chosen to jump in an ordered way.

\begin{lemma}\label{Lem:ordered-center-chain}
	Replacing some of the intermediate points in $\big\{x_1,x_2\cdots,x_{n-1}\big\}$ if necessary, one can assume that the $\varepsilon$-center pseudo orbit $\Gamma$ from $x_0$ to $x_n$  satisfies that all $t_i, 1\leq i\leq n$ have the same sigh. That is to say either $-\varepsilon<t_i\leq 0$ for all $1\leq i\leq n$ or $0\leq t_i<\varepsilon$ for all $1\leq i\leq n$. 
\end{lemma}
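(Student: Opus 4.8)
The plan is to reshape $\Gamma$ — keeping $x_0$ and $x_n$ fixed — so that \emph{all} of its jumps become equal to one well‑chosen number $c$ with $|c|<\varepsilon$; since a common value has a single sign, this already yields the lemma. Write $\widehat f_i:=\widehat f|_{\mathbb{R}_{i-1}}\colon\mathbb{R}_{i-1}\to\mathbb{R}_i$, which is an increasing homeomorphism defined on all of $\mathbb{R}_{i-1}$ with $\widehat f_i(0)=t_i$ and $|t_i|<\varepsilon$. For each $c\in[-\varepsilon,\varepsilon]$ I would set $s_0(c)=0$ and recursively $s_i(c)=\widehat f_i\big(s_{i-1}(c)\big)-c$ for $1\le i\le n$, and then declare the candidate points $x_i'=\theta_i\big(s_i(c)\big)$. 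The leaf $\mathcal{F}^c_{x_i}$ does not change when its marked point is moved from $x_i$ to $x_i'$, so the maps $\theta_i,\widehat f_i$ stay the same and $f(x_{i-1}')=\theta_i\big(\widehat f_i(s_{i-1}(c))\big)$ lies on the same center leaf as $x_i'$, at signed center‑distance $\widehat f_i(s_{i-1}(c))-s_i(c)=c$ from it. Thus $\{x_0,x_1',\dots,x_{n-1}',\theta_n(s_n(c))\}$ is a center pseudo orbit all of whose jumps equal $c$; it runs from $x_0$ to $x_n$ and is an $\varepsilon$-center pseudo orbit as soon as $s_0(c)=0$ (automatic), $s_n(c)=0$, and $|c|<\varepsilon$.

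The heart of the argument is then to find $c\in(-\varepsilon,\varepsilon)$ with $s_n(c)=0$, which I would obtain from the intermediate value theorem applied to the continuous map $c\mapsto s_n(c)$ on $[-\varepsilon,\varepsilon]$. At $c=\varepsilon$: $s_1(\varepsilon)=t_1-\varepsilon<0$ because $t_1<\varepsilon$; and inductively, if $s_{i-1}(\varepsilon)<0$ then monotonicity gives $\widehat f_i(s_{i-1}(\varepsilon))<\widehat f_i(0)=t_i<\varepsilon$, so $s_i(\varepsilon)<0$; hence $s_n(\varepsilon)<0$. Symmetrically, using $t_i>-\varepsilon$, one gets $s_i(-\varepsilon)>0$ for all $i$, so $s_n(-\varepsilon)>0$. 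Therefore some $c^{*}\in(-\varepsilon,\varepsilon)$ satisfies $s_n(c^{*})=0$, and the modified collection $\{x_0,x_1',\dots,x_{n-1}',x_n\}$ is an $\varepsilon$-center pseudo orbit from $x_0$ to $x_n$ all of whose jumps equal $c^{*}$: they all share a sign, landing in the case $0\le t_i<\varepsilon$ when $c^{*}\ge0$ and in $-\varepsilon<t_i\le0$ when $c^{*}\le0$.

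Finally I would dispatch the two routine points. First, that replacing $x_i$ by $x_i'$ is legitimate as a step of a center pseudo orbit: immediate when the leaf is non-compact (where $\theta_i$ is a global isometry), and when $\mathcal{F}^c_{x_i}$ is a circle it uses only that $|c^{*}|<\varepsilon<\varepsilon_0$ lies below half the length of any center leaf, so that the length-$|c^{*}|$ arc from $x_i'$ to $f(x_{i-1}')$ realizes the center distance and the step indeed lands in $\mathcal{F}^c_\varepsilon$; in particular nothing here requires $x_i'$ to be close to $x_i$ along its leaf. I do not expect a genuine obstacle. The one thing to set up correctly is the structural observation that pinning both endpoints $x_0,x_n$ while forcing all jumps equal leaves exactly one scalar degree of freedom $c$, whose output $s_n(c)$ sweeps continuously from a positive value at $c=-\varepsilon$ to a negative value at $c=\varepsilon$; once that is in place, the intermediate value theorem together with the two-line endpoint sign computations (using $|t_i|<\varepsilon$ and monotonicity of the $\widehat f_i$) finishes the proof.
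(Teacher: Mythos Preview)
Your argument is correct and takes a genuinely different route from the paper. The paper proceeds by an iterative repair procedure: assuming (after symmetry) that $\widehat f^{\,n}(0_0)<0_n$, it introduces a monovariant $k(\Gamma)\in[0,n]$ measuring how far back from index $n$ the desired chain of inequalities $f^{i+1}(x_0)\le f(x_i)\le x_{i+1}$ already holds, and when $k\ge 1$ it replaces a contiguous block of intermediate points by either a segment of the forward orbit of $x_0$ or a segment of the backward orbit of $x_k$; this either finishes immediately or strictly decreases $k$, so the process terminates. You instead force \emph{all} jumps to equal a single constant $c$, reducing the two endpoint constraints (fix $x_0$ and $x_n$) to the single scalar equation $s_n(c)=0$, and solve it by the intermediate value theorem via the clean sign check at $c=\pm\varepsilon$ using only $|t_i|<\varepsilon$ and the monotonicity of each $\widehat f_i$. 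Your proof is shorter and yields the stronger conclusion that the jumps can be made identically equal, not merely of one sign; the paper's construction has the incidental feature that every replacement point is an actual $f$-iterate of some original $x_j$, but this is not used downstream. One small phrasing point: when you say ``the maps $\theta_i,\widehat f_i$ stay the same'' you really mean that the center \emph{leaves} and the original lifts $\widehat f_i$ are unchanged while the parameterizations are recentered at $x_i'$; your subsequent computation of $t_i'=\widehat f_i(s_{i-1}(c))-s_i(c)=c$ in the original coordinates, reinterpreted in the shifted ones, is already carried out correctly.
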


\begin{proof}
	Note that $f(\mathcal{F}^c_{x_i})=\mathcal{F}^c_{x_{i+1}}$.
	We split the proof into two cases whether the center leaves are compact or not.
	
	\paragraph{Non-compact case} Assume each $\mathcal{F}^c_{x_i}, i=0,1,\cdots,n$ is non-compact and thus diffeomorphic to $\mathbb{R}$ by $\theta_i$. To simplify notations, we define an order ``$\leq$'' on each $\mathcal{F}^c_{x_i}$ inherited from that of $\mathbb{R}$. To be precise, for every $w,w'\in \mathcal{F}^c_{x_i}$, one defines $w\leq w'$ (resp. $w< w'$) if and only if $\theta_i^{-1}(w)\leq \theta_i^{-1}(w')$ (resp. $\theta_i^{-1}(w)< \theta_i^{-1}(w')$).
	
	Notice that $f^i(x_0)\in\mathcal{F}^c_{x_i}$ for each $i=0,1,\cdots,n$. 
	If in the trivial case $f^n(x_0)= x_{n}$, then one just takes $x_i=f^i(x_0)$ for each $i=1,2,\cdots n-1$ and as a result  every $t_i=0$.
	So in the following, without loss of generality,  assume that $f^n(x_0)< x_{n}$ and the other case where $x_n<f^n(x_0)$ is symmetric. 
	One concludes Lemma~\ref{Lem:ordered-center-chain} in the non-compact case from the following claim.
	
	\begin{claim}\label{Claim:keep-order-noncompact}
		Replacing some of the intermediate points in $\big\{x_1,x_2,\cdots,x_{n-1}\big\}$ if necessary, we can choose $\Gamma$ to satisfy that  $f^{i+1}(x_0)\leq f(x_i)\leq x_{i+1}$  for any $0\leq i\leq n-1$. This implies that $-\varepsilon<t_i\leq 0$ for all $1\leq i\leq n$. 
	\end{claim}
	\begin{proof}[Proof of Claim~\ref{Claim:keep-order-noncompact}]
		By the fact that $Df$ preserves the orientation of $\mathcal{F}^c$, the assumption $f^n(x_0)< x_{n}$ implies $f^{i}(x_0)< f^{-n+i}(x_n)$ for any $i=0,1,\cdots, n$. 
		\medskip
		
		We associate to the center pseudo orbit $\Gamma=\big\{x_0,x_1,\cdots,x_n\big\}$ an integer
		$k=k(\Gamma)\in [0,n]$ defined by the following:
		\begin{itemize}
			\item If $f^{n}(x_0)\leq f(x_{n-1})\leq x_{n}$ holds, then $k$ is the smallest integer in $[0,n-1]$ such that for any $i\in [k,n-1]$, it satisfies $f^{i+1}(x_0)\leq f(x_i)\leq x_{i+1}$;
			\item If otherwise, take $k=n$.
		\end{itemize} 
	
		If $k=k(\Gamma)=0$, then Claim~\ref{Claim:keep-order-noncompact} satisfies automatically for $\Gamma$. Thus In the following one assumes that $k=k(\Gamma)\geq 1$.
		
		By the choice of $k$, one knows that the inequality $f^{k}(x_0)\leq f(x_{k-1})\leq x_{k}$ does not hold. Note that if $k$ is taken as in the first case, then $f^{k+1}(x_0)\leq f(x_k)$ implies $f^{k}(x_0)\leq x_k$; and if in the second case where $k=n$, then one also has $f^{k}(x_0)\leq x_k$ by assumption.
		Thus considering the orders of $f^{k}(x_0), f(x_{k-1})$ and $x_{k}$, 
		there are the following two cases: either $f(x_{k-1})<f^k(x_0)\leq x_k$ or $f^k(x_0)\leq x_k<f(x_{k-1})$.
		\medskip
		
		\noindent\textit{Sub-case (a):} If $f(x_{k-1})<f^k(x_0)\leq x_k$,  the  fact $d^c(f(x_{k-1}),x_k)<\varepsilon$  implies $d^c(f^k(x_{0}),x_k)<\varepsilon$. Thus the new collection  
		$$\big\{x_0,f(x_0),\cdots,f^{k-1}(x_0),x_k,x_{k+1},\cdots,x_{n-1},x_n\big\}$$ 
		is an $\varepsilon$-center pseudo orbit that satisfies Claim~\ref{Claim:keep-order-noncompact} and one just denotes it by $\Gamma'$.
		\bigskip
		
		\noindent\textit{Sub-case (b):} Now we assume $f^k(x_0)\leq x_k<f(x_{k-1})$.
		By the orientation preserving property, we have $f^{-1}(x_k)<x_{k-1}$ and $x_0\leq f^{-k}(x_k)$.
		Thus there exists a minimal $j\in[1,k-1]$ such that $x_i\leq f^{-k+i}(x_k)$ for any $i\in[0,j-1]$ while $f^{-k+j}(x_k)<x_{j}$. In particular when $i=j-1$, the inequality $x_{j-1}\leq f^{-k+j-1}(x_k)$ implies that $f(x_{j-1})\leq f^{-k+j}(x_k)(<x_j)$. Thus 
		$$d^c(f(x_{j-1}), f^{-k+j}(x_k))\leq d^c(f(x_{j-1}), x_j)<\varepsilon.$$
		Consider the new collection
		$$\big\{x_0,x_1,\cdots,x_{j-1}, f^{-k+j}(x_k),f^{-k+j+1}(x_k), \cdots, f^{-1}(x_k),x_k,x_{k+1}, \cdots, x_{n-1},x_n\big\}.$$ 
		This is an $\varepsilon$-center pseudo orbit and  on $\mathcal{F}^c_{x_k}$, it satisfies that 
		$$f^k(x_0)<f(f^{-1}(x_k))=x_k\leq x_k.$$
		So we denote the new collection by $\Gamma''$ in Sub-case (b).
		\medskip
		
		Note that in Sub-case (b) the new defined $\varepsilon$-center pseudo orbit $\Gamma''$ satisfies that 
		$k(\Gamma'')\leq k(\Gamma)-1<k(\Gamma)$. Moreover, the two endpoints $x_0$ and $x_n$ are always kept.	Since the length of pseudo orbit is finite, this process ends in finite steps. The final $\varepsilon$-center pseudo orbit, denoted also by $\Gamma'$ as in Sub-case (a), satisfies Claim~\ref{Claim:keep-order-noncompact}.	
	\end{proof}
	
	\paragraph{Compact case} Assume each $\mathcal{F}^c_{x_i}, i=0,1,\cdots,n$ is compact and thus  $\theta_i\colon \mathbb{R}_i\rightarrow\mathcal{F}^c_{x_i}$ is a universal covering map. By the locally isometric property of $\theta_i$, one can see that the piece $\widehat{\Gamma}=\{0_0,0_1,0_2,\cdots,0_{n-1},0_n\}$ is an $\varepsilon$-pseudo orbit of the lifting dynamics $\widehat{f}$ where $0_i\in \mathbb{R}_i$ is the zero point on $\mathbb{R}_i$ for any $i=0,1,\cdots,n$.
	Note that $\widehat{f}(0_i)=t_{i+1}$. Applying the same arguments as in the non-compact case, one can assume that all $t_i,1\geq n$ have the same sign. This completes the proof of
	Lemma~\ref{Lem:ordered-center-chain}.
\end{proof}

\begin{remark}\label{Rem:ordered-chain}
	Since each $\widehat f|_{\mathbb{R}_i}$ is strictly increasing, thus when $\Gamma$ is taken to satisfy that  $-\varepsilon<t_i\leq 0$ for all $1\leq i\leq n$ as in Lemma~\ref{Lem:ordered-center-chain}, then $\widehat f^i(0_0)\leq 0_i$ for all $1\leq i\leq n$. The other case is symmetric.
\end{remark}

\textbf{One fixes the constants $0<\lambda<1$ and  $0<\eta_0<1/10^{-3}$  from Section~\ref{Section:setting} and $0<\varepsilon_0<1$ in Section~\ref{Section:lifting-dynamics}.}

\subsection{Lifting bundle}\label{Seciton:lifting-bundle}
Recall that we have fixed two $C^\infty$-sub-bundles $F^s$ and $F^u$ that are $C^0$-close to $E^s$ and $E^u$ respectively. In this section, for a bi-infinite center pseudo orbit $\Gamma$, we pull back the  bundle $F^s\oplus F^u$ over $M$ to $\bigsqcup\limits_{i\in\mathbb{Z}}\mathbb{R}_i$ through the  $C^1$-immersion map $\theta\colon \bigsqcup\limits_{i\in\mathbb{Z}}\mathbb{R}_i\rightarrow M$. Then we obtain a tubular neighborhood of $\bigsqcup\limits_{i\in\mathbb{Z}}\mathbb{R}_i$ on which one could define an exponential map to $M$ and a bundle map which is partially hyperbolic. The construction follows essentially from~\cite[Section 4.1]{GS-closing} where the ideas originates from~\cite{hps}. We sketch the arguments and conclusions here. Also some of the notations are borrowed from~\cite{GS-closing}.

Assume $\varepsilon\in(0,\varepsilon_0)$ and let $\Gamma=\big\{x_0,x_1,\cdots,x_n\big\}$ be an $\varepsilon$-center pseudo orbit %from $x$ to $y$ where $x_0=x$ and $x_n=y$ 
from Section~\ref{Section:lifting-dynamics} that satisfies Lemma~\ref{Lem:ordered-center-chain}. 
Let $\displaystyle\Gamma_\infty=\big\{x_i\big\}_{i=-\infty}^{+\infty}$ be the extended bi-infinite center pseudo orbit such that $x_i=f^i(x_0)$ when $i\leq -1$ and $x_i=f^{i-n}(x_n)$ when $i\geq n+1$  as in Section~\ref{Section:lifting-dynamics}.
One pulls back the $C^\infty$-bundle $F^s\oplus F^u$ over $M$ by the $C^1$-leaf immersion $\theta\colon \bigsqcup\limits_{i\in\mathbb{Z}}\mathbb{R}_i\rightarrow M$ to get a fiber bundle $\theta^*(F^s\oplus F^u)$ over 
$\bigsqcup\limits_{i\in\mathbb{Z}}\mathbb{R}_i$. Thus one has the following commuting diagram.
\begin{displaymath}
\xymatrix{   \theta^*(F^s\oplus F^u) \ar[r]^{\theta_*} \ar[d]_{\pi} & F^s\oplus F^u \ar[d]^{\pi}\\
	\bigsqcup\limits_{i\in\mathbb{Z}}\mathbb{R}_i  \ar[r]^\theta& M}
%\bigcup\limits_{i=0}^{n-1}\mathcal{F}^c_{x_i} \ar[r]^f& \bigcup\limits_{i=1}^{n}\mathcal{F}^c_{x_{i+1}} }
\end{displaymath}

For each $t\in\mathbb{R}_i$ where $i\in\mathbb{Z}$, one pulls back the Riemannian metric on $F^s_{\theta_i(t)}\oplus F^u_{\theta_i(t)}$ and thus the fiber 
$$\theta^*(F^s\oplus F^u)_t \colon=\theta^*(F^s_{\theta_i(t)}\oplus F^u_{\theta_i(t)})$$
over $t\in\mathbb{R}_i$ is a linear space equipped with an inner product structure.
In this way one defines a metric $\|\cdot\|_t$ on $\theta^*(F^s\oplus F^u)_t$.
For any constant $\delta>0$, any $t\in\mathbb{R}_i$ and any interval $I\subset \mathbb{R}_i$, one denotes
$$\theta^*(F^s\oplus F^u)_t(\delta)=\big\{v\in \theta^*(F^s\oplus F^u)_t: \|v\|_t \leq \delta\big\},$$
$$\theta^*(F^s\oplus F^u)_I(\delta)=\bigcup_{t\in I}\theta^*(F^s\oplus F^u)_t(\delta),$$
and
$$\theta^*(F^s\oplus F^u)(\delta)=\bigcup_{t\in \bigsqcup_{i\in\mathbb{Z}}\mathbb{R}_i}\theta^*(F^s\oplus F^u)_t(\delta).$$
In particular, when $\delta=0$, the $0$-section $\theta^*(F^s\oplus F^u)(0)$ is reduced to $\bigsqcup\limits_{i\in\mathbb{Z}}\mathbb{R}_i$.

For $t\in\mathbb{R}_i$  and $v\in\theta^*(F^s\oplus F^u)_t$, let
$$\Phi(v)=\exp_{\theta_i(t)}({\theta}_*(v)).$$
In this way one defines the exponential map $\Phi\colon\theta^*(F^s\oplus F^u)\rightarrow M$.
Then by~\cite[Proposition 4.5]{GS-closing}, there exists a constant $\delta_0>0$ that depends only on $f,F^s,F^u$ such that for any $0<\delta\leq\delta_0$ and for any $t\in\bigsqcup\limits_{i\in\mathbb{Z}}\mathbb{R}_i$, the set $\Phi(\theta^*(F^s\oplus F^u)_{(t-\delta,t+\delta)}(\delta))$ is open in $M$. Moreover, the exponential map $\Phi$ is a diffeomorphism from $\theta^*(F^s\oplus F^u)_{(t-\delta,t+\delta)}(\delta)$ to its image. As a consequence, there exists a constant $C_0>1$ that depends on $f$ such that for any $t\in\bigsqcup\limits_{i\in\mathbb{Z}}\mathbb{R}_i$ and any $v\in\theta^*(F^s\oplus F^u)_{t}(\delta_0/C_0)$, there exists a unique vector
$$\theta^*f(v)\colon=\Phi^{-1}\circ f\circ\Phi(v)\in \theta^*(F^s\oplus F^u)_{t}(\delta_0/2),$$
where $\Phi^{-1}$ is defined from $\Phi\big(\theta^*(F^s\oplus F^u)_{(t-\delta_0,t+\delta_0)}(\delta_0)\big)$ to $\theta^*(F^s\oplus F^u)_{(t-\delta_0,t+\delta_0)}(\delta_0)$.
That is to say, one has the following commuting diagram:
\begin{displaymath}
	\xymatrix{   \theta^*(F^s\oplus F^u)(\delta_0/C_0) \ar[r]^{\theta^*f} \ar[d]_{\Phi} & \theta^*(F^s\oplus F^u)(\delta_0/2) \ar[d]^{\Phi}\\
		M  \ar[r]^{f}& M}
\end{displaymath}
Moreover, the lifting map $\theta^*f$ is a diffeomorphism from $\theta^*(F^s\oplus F^u)(\delta_0/C_0)$ to its image. 
On the other hand, one pulls back the Riemannian metric on $M$ through $\Phi$ to obtain a local metric $\widetilde{d}(\cdot,\cdot)$ on $\theta^*(F^s\oplus F^u)(\delta_0)$. To be precise, for any two $v_1,v_2\in \theta^*(F^s\oplus F^u)_{(t-\delta_0,t+\delta_0)}(\delta_0)$ for some $t\in\bigsqcup\limits_{i\in\mathbb{Z}}\mathbb{R}_i$, one has $\widetilde{d}(v_1,v_2)=d(\Phi(v_1),\Phi(v_2))$. See similar statements in~\cite[Page 32]{GS-closing}.

\begin{remark}\label{Rem:restriction-to-0-section}
	When restricted to the $0$-section $\theta^*(F^s\oplus F^u)(0)$ (which is  $\bigsqcup\limits_{i\in\mathbb{Z}}\mathbb{R}_i$ as pointed out above), the map 
	$$\theta^*f|_{\theta^*(F^s\oplus F^u)(0)}\colon \theta^*(F^s\oplus F^u)(0)\rightarrow \theta^*(F^s\oplus F^u)(0)$$ 
	is reduced to $\widehat f\colon \bigsqcup\limits_{i\in\mathbb{Z}}\mathbb{R}_i\rightarrow \bigsqcup\limits_{i\in\mathbb{Z}}\mathbb{R}_i$. In this sense, the map $\theta^*f$ is in fact an extension of $\widehat f$ to a tubular neighborhood of $\bigsqcup\limits_{i\in\mathbb{Z}}\mathbb{R}_i$.
	Thus in the following, we identify the $0$-section $\theta^*(F^s\oplus F^u)(0)$ to  $\bigsqcup\limits_{i\in\mathbb{Z}}\mathbb{R}_i$ and associated the order on  $\bigsqcup\limits_{i\in\mathbb{Z}}\mathbb{R}_i$ to $\theta^*(F^s\oplus F^u)(0)$.
	One has
	\[\theta^*f|_{\theta^*(F^s\oplus F^u)(0)}=\widehat f\colon \bigsqcup\limits_{i\in\mathbb{Z}}\mathbb{R}_i\rightarrow \bigsqcup\limits_{i\in\mathbb{Z}}\mathbb{R}_i.\]
\end{remark}

\begin{remark}\label{Rem:periodic-to-general}
	We point out that~\cite[Proposition 4.5]{GS-closing} deals with periodic center leaves while here we consider general ones. However, the existence of $\delta_0$ and $C_0$ only depends on the uniform hyperbolicity of the two extreme bundles $E^s/E^u$ (or equivalently on the constant $0<\lambda<1$) and the bounded norm of $f$.
\end{remark}

The partially hyperbolic splitting $TM=E^s\oplus E^c\oplus E^u$ induces a partially hyperbolic structure for the bundle map $\theta^*f\colon\theta^*(F^s\oplus F^u)(\delta_0/C_0)\rightarrow \theta^*(F^s\oplus F^u)(\delta_0/2)$ over the diffeomorphism $\widehat{f}\colon \bigsqcup\limits_{i\in\mathbb{Z}}\mathbb{R}_i\rightarrow \bigsqcup\limits_{i\in\mathbb{Z}}\mathbb{R}_i$ with  partially hyperbolic splitting
\begin{align*}
	T_{\bigsqcup\limits_{i\in\mathbb{Z}}\mathbb{R}_i}\theta^*(F^s\oplus F^u)  & =  \theta^*(E^s)|_{\bigsqcup\limits_{i\in\mathbb{Z}}\mathbb{R}_i}\oplus T(\bigsqcup\limits_{i\in\mathbb{Z}}\mathbb{R}_i)\oplus \theta^*(E^u)|_{\bigsqcup\limits_{i\in\mathbb{Z}}\mathbb{R}_i}\\
	& = \left(\theta^*(E^s)\oplus \theta^*(E^c)\oplus \theta^*(E^u)\right)|_{\bigsqcup\limits_{i\in\mathbb{Z}}\mathbb{R}_i}.
\end{align*}
The main ingredient is that $\theta\colon \bigsqcup\limits_{i\in\mathbb{Z}}\mathbb{R}_i\rightarrow M$ is a normally hyperbolic immersion following~\cite[Page 69]{hps}.
The detailed proofs can be found in~\cite[Proposition 4.6]{GS-closing}.
\medskip

\textbf{One fixes the constants $\delta_0>0$ and $C_0>1$ from Section~\ref{Seciton:lifting-bundle}.}

\subsection{Lipschitz shadowing of invariant manifolds and leaf conjugancy}\label{Section:Lipschtz-shadowing}
Recall that $X\in\mathcal{X}^r(M)$ is a $C^r$ vector field which is positively transverse to $E^s\oplus E^u$ (and also transverse to $F^s\oplus F^u$) everywhere and $X_\tau$ is the time-$\tau$ map generated by $X$. In this section, we consider the  lifting bundle map of the composition  $X_\tau\circ f$ (which can be seen as $C^r$-small  perturbations of $f$ provided $\tau$ small). This allows us to have a Lipschitz shadowing property of invariant manifolds (Theorem~\ref{Thm:Lip-shadowing}) and a leaf conjugacy map (Section~\ref{Section:leaf-conjugacy}).

The transverse property implies that $X$ has no singularity. 
Denote by $f_\tau=X_\tau\circ f\colon M\rightarrow M$ for every $\tau\in\mathbb{R}$.
One lifts $X$ to be a vector field $\widetilde{X}$ on $\theta^*(F^s\oplus F^u)(\delta_0)$ as follows
$$\widetilde{X}(v)= D\Phi^{-1} (X(\Phi(v))),~~~~~~\text{for}~\forall v\in \theta^*(F^s\oplus F^u)(\delta_0).$$
The lifting vector field $\widetilde{X}$ is $C^0$-continuous on $\theta^*(F^s\oplus F^u)(\delta_0)$, while for $\tau$ small, the time $\tau$-map $\widetilde{X}_\tau$ generated by $\widetilde{X}$ is a $C^1$-diffeomorphism from $\theta^*(F^s\oplus F^u)(\delta_0/2)$ to its image in $\theta^*(F^s\oplus F^u)(\delta_0)$ and satisfies
$\widetilde{X}_\tau=  \Phi^{-1}\circ X_\tau \circ\Phi$. One has the following theorem which is stated in~\cite[Lemma 4.9, Theorem 4.2\&4.3]{GS-closing}, and whose proofs are originated from~\cite[Theorem 6.8]{hps} and based on~\cite[Theorem 6.1]{hps} which shows the existence and properties of local invariant manifolds. Recall that we have fixed the constant $0<\eta_0<10^{-3}$  from Section~\ref{Section:setting}.

\begin{theorem}\label{Thm:Lip-shadowing}
	There exist two constants $\tau_0>0$ and $L>1$ such that for any $\tau\in[-\tau_0,\tau_0]$, the following properties are satisfied.
	\begin{enumerate}
		\item\label{item:lift-diffeo-family} The family of $C^1$-diffeomorphisms 
		$$\widetilde{f}_\tau=\widetilde{X}_\tau\circ \theta^*f\colon \theta^*(F^s\oplus F^u)(\delta_0/C_0)\rightarrow \theta^*(F^s\oplus F^u)(\delta_0)$$
		is well defined and $\widetilde{f}_\tau\rightarrow\theta^*f$ in the $C^1$-topology as $\tau\rightarrow 0$.
		Moreover, one has 
		$$\widetilde{d}(v,\widetilde{X}_\tau(v))\leq L\cdot|\tau|, ~~~~~~\forall v\in \theta^*(F^s\oplus F^u)(\delta_0/C_0).$$
		\item\label{item:invariant-section} There exists an $\widetilde{f}_\tau$-invariant $C^1$-smooth section $\sigma_{\tau}\colon\bigsqcup\limits_{i\in\mathbb{Z}}\mathbb{R}_i\rightarrow \theta^*({F}^s\oplus {F}^u)(\delta_0)$ which is defined as
		$$\sigma_{\tau}\left(\bigsqcup\limits_{i\in\mathbb{Z}}\mathbb{R}_i\right)=
		\bigcap_{n\in\mathbb{Z}}\widetilde{f}^n_{\tau} \left(\theta^*({F}^s\oplus {F}^u)(\delta_0/C_0)\right),$$
		and which varies continuous with respect to $\tau$. 
		The two $C^1$-manifolds 
		$$W^{cs}_\tau=\bigcap_{n=0}^{+\infty}\widetilde{f}^{-n}_{\tau} \left(\theta^*({F}^s\oplus {F}^u)(\delta_0/C_0)\right)~~\text{~and~}~~W^{cu}_\tau=\bigcap_{n=0}^{+\infty}\widetilde{f}^{n}_{\tau} \left(\theta^*({F}^s\oplus {F}^u)(\delta_0/C_0)\right),$$
		 intersect transversely at $\sigma_{\tau}\big(\bigsqcup\limits_{i\in\mathbb{Z}}\mathbb{R}_i\big)=W^{cs}_\tau\cap W^{cu}_\tau$.
		 Moreover, the section $\sigma_\tau$ converges to $\sigma_0$ in the $C^1$-topology as $\tau\rightarrow 0$, where $\sigma_0(\bigsqcup\limits_{i\in\mathbb{Z}}\mathbb{R}_i)=\bigsqcup\limits_{i\in\mathbb{Z}}\mathbb{R}_i$ is the $0$-section.
		\item\label{item:invariant-manifold} For any $t\in\mathbb{R}_i, i\in\mathbb{Z}$ and any $C^1$ map $\varphi\colon E^s_{\theta_i(t)}(\delta_0)\oplus E^u_{\theta_i(t)}(\delta_0)\rightarrow E^c_{\theta_i(t)}(\delta_0)$ satisfying $\varphi(0)=0, \|\partial\varphi/\partial \alpha\|\leq \eta_0$ for $\alpha=s,u$, the $C^1$-submanifold
		$$D^{su}_{\theta_i(t)}=\exp_{\theta_i(t)}\left(\grap(\varphi)\right)=
		\exp_{\theta_i(t)}\left(\left\{v^{su}+\varphi(v^{su})\colon v^{su}\in E^s_{\theta_i(t)}(\delta_0)\oplus E^u_{\theta_i(t)}(\delta_0)\right\}\right)$$
		intersects $\Phi\left(\sigma_{\tau}((t-\delta_0,t+\delta_0))\right)$ at a unique point $q=q(t,\tau,\varphi)$. Moreover, let $v_q^{su}\in E^s_{\theta_i(t)}(\delta_0)\oplus E^u_{\theta_i(t)}(\delta_0)$ be such that $q=\exp_{\theta_i(t)}\left(v_q^{su}+\varphi(v_q^{su})\right)$, then 
		$$\|v_q^{su}\|+\|\varphi(v_q^{su})\|\leq L\cdot|\tau|,$$
		which in particular implies that $\sigma_{\tau}(\bigsqcup\limits_{i\in\mathbb{Z}}\mathbb{R}_i)\subset \theta^*(F^s\oplus F^u)(L\cdot|\tau|)$.
		
		%$$\exp_{\theta_i(t)}^{-1}(q)=q^{su}+\varphi(q^{su}) \text{~~with~~} q^{su}\in E^s_{\theta_i(t)}(\delta_0)\oplus E^u_{\theta_i(t)}(\delta_0) \text{~~and~~}\varphi(q^{su})\in E^c_{\theta_i(t)}(\delta_0)$$
	\end{enumerate}
\end{theorem}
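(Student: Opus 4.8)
The plan is to read Theorem~\ref{Thm:Lip-shadowing} as the standard persistence-of-normal-hyperbolicity statement applied to the lifted bundle map $\theta^*f$ over $\widehat f$, perturbed by the lifted flow $\widetilde X_\tau$; so it should follow from the invariant-manifold machinery of~\cite[Theorems 6.1 and 6.8]{hps}, in the form it is used in~\cite[Lemma 4.9, Theorems 4.2 and 4.3]{GS-closing}. Two technical points must be addressed. First, the base $\bigsqcup_{i\in\mathbb Z}\mathbb R_i$ is non-compact, so the uniform version of normal hyperbolicity is needed; but by Section~\ref{Seciton:lifting-bundle} and Remark~\ref{Rem:periodic-to-general} all the relevant constants ($\delta_0$, $C_0$, the hyperbolicity rate $\lambda$, and the bounds on $D(\theta^*f)$) depend only on $\lambda$ and $\|f\|$, hence are uniform along the lift. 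Second, the lifted vector field $\widetilde X$ is only $C^0$; but $\widetilde X_\tau=\Phi^{-1}\circ X_\tau\circ\Phi$ is genuinely $C^1$ since $\Phi$ and $X_\tau$ are, so $\widetilde f_\tau=\widetilde X_\tau\circ\theta^*f$ is a $C^1$ diffeomorphism and the $C^1$ perturbation theory applies to it.

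Item~\ref{item:lift-diffeo-family} is elementary and I would dispose of it first. By Section~\ref{Seciton:lifting-bundle}, $\theta^*f$ carries $\theta^*(F^s\oplus F^u)(\delta_0/C_0)$ into $\theta^*(F^s\oplus F^u)(\delta_0/2)$, and for $|\tau|\le\tau_0$ small the map $\widetilde X_\tau$ displaces every point by less than $\delta_0/2$ in the $\widetilde d$-metric, so $\widetilde f_\tau$ is well defined with image in $\theta^*(F^s\oplus F^u)(\delta_0)$; shrinking $\tau_0$ makes it a $C^1$ diffeomorphism onto its image. The displacement estimate is immediate: $\widetilde d(v,\widetilde X_\tau(v))=d(\Phi(v),X_\tau(\Phi(v)))\le|\tau|\cdot\sup_M\|X\|=:L|\tau|$. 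And $X_\tau\to\id$ in $C^r$ as $\tau\to0$ gives $\widetilde X_\tau\to\id$ in $C^1$ (conjugation by the fixed $C^1$ map $\Phi$ is $C^1$-continuous), hence $\widetilde f_\tau\to\theta^*f$ in $C^1$.

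For item~\ref{item:invariant-section} the input is that $\theta^*f$ is normally hyperbolic at the $0$-section $\bigsqcup_{i\in\mathbb Z}\mathbb R_i$ with uniform rates — this is the normally hyperbolic leaf immersion property of $\theta$ from~\cite[p.\,69]{hps} together with the splitting recalled at the end of Section~\ref{Seciton:lifting-bundle}. Since $\widetilde f_\tau$ is $C^1$-close to $\theta^*f$ by item~\ref{item:lift-diffeo-family}, \cite[Theorem 6.8]{hps} (as in~\cite[Theorems 4.2 and 4.3]{GS-closing}) provides a unique $\widetilde f_\tau$-invariant $C^1$ section, which must coincide with the set of points whose full $\widetilde f_\tau$-orbit stays in the tube, i.e.\ $\sigma_\tau(\bigsqcup_{i}\mathbb R_i)=\bigcap_{n\in\mathbb Z}\widetilde f_\tau^n(\theta^*(F^s\oplus F^u)(\delta_0/C_0))$; the forward- and backward-staying sets $W^{cs}_\tau$, $W^{cu}_\tau$ are the center-stable and center-unstable manifolds, $C^1$ and tangent along $\sigma_\tau$ to $\theta^*(F^s)\oplus T(\bigsqcup_i\mathbb R_i)$ and $T(\bigsqcup_i\mathbb R_i)\oplus\theta^*(F^u)$ respectively, hence transverse exactly along $\sigma_\tau(\bigsqcup_i\mathbb R_i)$. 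The $C^1$-convergence $\sigma_\tau\to\sigma_0$ is the continuous dependence of the invariant manifold on the $C^1$ perturbation, again from~\cite[Theorem 6.8]{hps}.

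Item~\ref{item:invariant-manifold} combines a transversality observation with a telescoping estimate, and is where the constant $L$ must be tracked. Since $\varphi(0)=0$ and $\|\partial\varphi/\partial\alpha\|\le\eta_0$, in $\exp_{\theta_i(t)}$-coordinates the disk $D^{su}_{\theta_i(t)}$ is a graph over $E^s\oplus E^u$ of slope $\le\eta_0$, hence uniformly transverse to a thin center cone; by item~\ref{item:invariant-section} the curve $\Phi(\sigma_\tau((t-\delta_0,t+\delta_0)))$ is $C^1$-close to the genuine center leaf, hence tangent to such a cone; two submanifolds of complementary dimension with these properties meet transversely at one point $q=q(t,\tau,\varphi)$ in the $\delta_0$-ball. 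For the size bound I would first show $\sigma_\tau(\bigsqcup_i\mathbb R_i)\subset\theta^*(F^s\oplus F^u)(L|\tau|)$ directly: using $\widetilde f_\tau=\widetilde X_\tau\circ\theta^*f$ and projecting onto $\theta^*(F^s)$ and $\theta^*(F^u)$, the map $\theta^*f$ contracts the $F^s$-component forward by $\lambda$ and the $F^u$-component backward by $\lambda$ (uniform partial hyperbolicity), while each $\widetilde X_\tau$ adds at most $L|\tau|$ by item~\ref{item:lift-diffeo-family}; invariance of $\sigma_\tau$ plus summing the geometric series bounds each component of $\sigma_\tau$ by $L|\tau|/(1-\lambda)$, and one renames $L$. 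Pulling $q$ back to $\exp_{\theta_i(t)}$-coordinates and using the uniform transversality then converts this into $\|v_q^{su}\|+\|\varphi(v_q^{su})\|\le L|\tau|$. I expect the main obstacle to be bookkeeping rather than conceptual — keeping $L$ honest through the composition and transferring the uniform normal-hyperbolicity input of~\cite{hps} verbatim to the non-compact lift — and all of it is carried out for periodic center leaves in~\cite{GS-closing}, the present generality changing nothing essential since the constants there were already intrinsic.
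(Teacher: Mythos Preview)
Your proposal is correct and follows exactly the approach the paper takes: the paper does not give an independent proof of Theorem~\ref{Thm:Lip-shadowing} but simply records that it is stated in~\cite[Lemma 4.9, Theorem 4.2\&4.3]{GS-closing} with proofs coming from~\cite[Theorems 6.1 and 6.8]{hps}, and points (via Remark~\ref{Rem:periodic-to-general}) to the fact that the passage from periodic to general center leaves changes nothing since the constants depend only on $\lambda$ and $\|f\|$. Your sketch in fact supplies more detail than the paper does; the only minor slip is that the tangent spaces of $W^{cs}_\tau$, $W^{cu}_\tau$ along $\sigma_\tau$ are close to $\theta^*(E^s)\oplus T(\bigsqcup_i\mathbb R_i)$ and $T(\bigsqcup_i\mathbb R_i)\oplus\theta^*(E^u)$ (the genuine invariant bundles), not to $\theta^*(F^s)$ and $\theta^*(F^u)$, though this does not affect the transversality argument.
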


\begin{remark}\label{Rem:identification-at-0-section}
Note that the map $\widetilde{f}_\tau=\widetilde{X}_\tau\circ\theta^*f$ in Theorem~\ref{Thm:Lip-shadowing} is the lift of $f_\tau=X_\tau\circ f$. In particular, when $\tau=0$, one has $\widetilde{f}_0=\theta^*f$ and as a consequence (recall Remark~\ref{Rem:restriction-to-0-section}), when restricted to the $0$-section $\theta^*({F}^s\oplus {F}^u)(0)=\bigsqcup\limits_{i\in\mathbb{Z}}\mathbb{R}_i$, it satisfies:
$$\widetilde{f}_0|_{\bigsqcup\limits_{i\in\mathbb{Z}}\mathbb{R}_i} = \theta^*f|_{\bigsqcup\limits_{i\in\mathbb{Z}}\mathbb{R}_i}=\widehat f\colon \bigsqcup\limits_{i\in\mathbb{Z}}\mathbb{R}_i\rightarrow \bigsqcup\limits_{i\in\mathbb{Z}}\mathbb{R}_i.$$
\end{remark}
\medskip

%Recall that the two $C^\infty$-sub-bundles $F^s$ and $F^u$ are $\eta_0$-$C^0$-close to $E^s$ and $E^u$ respectively. For a point $w\in M$ and a constant $0<\delta\leq\delta_0$, let 
%$$D^u_w(\delta)=\exp_w(F^u(\delta))~~~\text{and}~~~D^{su}_w(\delta)=\bigcup_{z\in\mathcal{F}^s(\delta)}D^u_z(\delta).$$ 
%Given a $C^\infty$-sub-bundles $F$ of $TM$ that is $\eta_0$-$C^0$-close to  $E^u$. For a point $w\in M$ and a constant $0<\delta\leq\delta_0$, let 
%$$D^F_w(\delta)=\exp_w(F_w(\delta))~~~\text{and}~~~D^{s,F}_w(\delta)=\bigcup_{z\in\mathcal{F}^s(\delta)}D^F_z(\delta).$$ 

\subsubsection{Leaf conjugacy}\label{Section:leaf-conjugacy}

Given a constant $\eta\in (0,\eta_0]$ and an $(s+u)$-dimensional $C^\infty$-sub-bundle $F$ of $TM$ that is $\eta$-$C^0$-close to  $E^s\oplus E^u$. For a point $z\in M$ and a constant $0<\delta\leq\delta_0$, let 
$$D^F_z(\delta)=\exp_z(F_z(\delta)).$$ 
Then following the arguments in~\cite[Section 4.2]{GS-closing}\footnote{Different with our setting here, \cite[Section 4.2]{GS-closing} deals with a bundle $E^s\oplus F$ where $E^s$ is the contracting sub-bundle in the partially hyperbolic splitting and $F$ is an $u$-dimensional $C^\infty$-sub-bundle that is $C^0$-close to the expanding sub-bundle $E^u$.}
, there exists a constant $\delta_1=\delta_1(F,\eta)\in(0,\delta_0]$ such that the following properties are satisfied.
	\begin{enumerate}
		\item\label{Item:map} For any point $z\in M$, the set $D^{F}_z(\delta_1)$ is a $C^r$-smooth local manifold whose tangent bundle is $2\eta$-close to $E^s\oplus E^u$. To be precise,
		$$\angle(T_wD^{F}_z(\delta_1),(E^s\oplus E^u)_w)<2\eta,~~\forall w\in D^{F}_z(\delta_1).$$
		As a consequence, there exists a $C^r$-map $\varphi^{su}_z\colon E^s(\delta_1/2)\oplus E^u(\delta_1/2)\rightarrow E^c(\delta_1/2)$ satisfying that
		\begin{itemize}
			\item $\grap(\varphi^{su}_z)=\exp^{-1}\left(D^{F}_z(\delta_1)\right)\cap T_zM(\delta_1/2)$;
			\item $\varphi^{su}_z(0_z^{su})=0_z^c$ and $\|\partial\varphi^{su}_z/\partial\alpha\|<2\eta$ for $\alpha=s,u$.
		\end{itemize}
		\item\label{Item:foliation} Given a $C^1$-center curve $\gamma^c$ with radius $\delta_1$ and centered at a point $z\in M$,  the set
		$$B^{F}_{z}(\gamma^c,\delta_1)=\bigcup_{w\in\gamma^c}  D^{F}_w(\delta_1)$$ 
		is a neighborhood of $z$ in $M$ and the family
		$$\mathcal{D}^{F}(\gamma^c)=\left\{D^{F}_w(\delta_1)\colon w\in\gamma^c \right\}$$
		forms a $C^0$-foliation of $B^{F}_{z}(\delta_1)$. Moreover, the foliation $\mathcal{D}^{F}(\gamma^c)$ is transverse to any $C^1$-curve $\gamma\in B^{F}_{z}(\delta_1)$ that is tangent to the $\eta$-cone filed of $E^c$.
		\item\label{Item:uniform-bound} By the uniform transversality and compactness, for any constant $\delta\in[0,\delta_1]$, there exists $\delta'\in [0,\delta_1]$ such that given a $C^1$-center curve $\gamma^c$ with radius $\delta_1$ and centered at a point $z\in M$, if $\gamma_1$ and $\gamma_2$ are two $C^1$-curves tangent to the $\eta_0$-cone field of $E^c$ in $B^{F}_{z}(\gamma^c,\delta_1)$ with endpoints $w_i,w_i',i=1,2$ that satisfy
		$$w_1,w_2\in D^{F}_{w_0}(\delta_1)~~\text{and}~~w_1',w_2'\in D^{F}_{w_0'}(\delta_1)~~\text{where}~~w_0,w_0'\in\gamma^c,$$
		then $\length(\gamma_1)\geq \delta$ implies $\length(\gamma_2)\geq \delta'$.
	\end{enumerate}

Denote by $\widetilde{D}^{F}_t(\delta)=\Phi^{-1}(D^{F}_{\theta(t)}(\delta))$ for $0<\delta<\delta_1$. Then $$\mathcal{D}^{F}(\delta_1/10)=\left\{\widetilde{D}^{F}_t(\delta)\cap \theta^*(F^s\oplus F^u)(\delta_1/10)\colon t\in\bigsqcup_{i\in\mathbb{Z}} \mathbb{R}_i\right\}$$
forms a $C^0$-foliation of $\theta^*(F^s\oplus F^u)(\delta_1/10)$ that is transverse to $(\Phi^{-1})^*(E^c)$ everywhere.
Following the idea of~\cite[Section 7]{hps}, one can define a leaf conjugacy as follows.
There exists a constant $\tau_1=\tau_1(\eta,F,\delta_1)\in(0,\tau_0]$ where $\tau_0$ is from Theorem~\ref{Thm:Lip-shadowing}, such that for every $\tau\in[-\tau_1,\tau_1]$, the leaf conjugacy 
$$h_\tau\colon \bigsqcup_{i\in\mathbb{Z}}\mathbb{R}_i\rightarrow \bigsqcup_{i\in\mathbb{Z}}\sigma_\tau(\mathbb{R}_i)$$
where
$$h_\tau(t)=\sigma_\tau(\mathbb{R}_i)\cap \widetilde{D}^{F}_t(\delta_1),~~\forall t\in\mathbb{R}_i,$$
is well defined. In particular, when $\tau=0$, the map $h_0|_{\mathbb{R}_i}$ is the identity map $\id|_{\mathbb{R}_i}$.
\medskip

\textbf{The  constant $\eta$ and the bundle $F$ (thus $\delta_1=\delta_1(\eta,F)$ and $\tau_1=\tau_1(\eta,F,\delta_1)$ ) will be determined in Proposition~\ref{Prop:center-perturbation} in the next section.}

\section{Proofs of main theorems}\label{Section:proof-main}
In this section, we prove Theorem~\ref{Thm:main} and Corollary~\ref{Thm:generic}.
Let $f\in\dph_1^r(M)$, $r\in\mathbb{N}_{\geq 2}\cup\{\infty\}$ and  $X\in\mathcal{X}^r(M)$ be a vector field that is positively transverse to $E^s\oplus E^u$. %Assume $x\dashv_f y$.
For the proof of Theorem~\ref{Thm:main}, all the settings and assumptions are taken from Section~\ref{Section:preparations}.

\subsection{A global perturbation along center direction}

One has the following proposition that realizes perturbations along center directions. The idea essentially follows from  arguments in~\cite[Proposition 4.17]{GS-closing} with the difference that ~\cite[Proposition 4.17]{GS-closing} deals with periodic leaves while here we consider general cases. We will include the proof for completeness. Recall Remark~\ref{Rem:restriction-to-0-section} and~\ref{Rem:identification-at-0-section},  the $0$-section $\theta^*({F}^s\oplus {F}^u)(0)$ is identified to $\bigsqcup\limits_{i\in\mathbb{Z}}\mathbb{R}_i$ and thus is associated with an order from $\bigsqcup\limits_{i\in\mathbb{Z}}\mathbb{R}_i$. Also restricted on $\theta^*({F}^s\oplus {F}^u)(0)$, one has $\widetilde{f}_0|_{\bigsqcup\limits_{i\in\mathbb{Z}}\mathbb{R}_i} = \theta^*f|_{\bigsqcup\limits_{i\in\mathbb{Z}}\mathbb{R}_i}=\widehat f$.
\begin{proposition}\label{Prop:center-perturbation}
	There exist a constant $\eta=\eta(f,X)\in (0,\eta_0]$, an $(s+u)$-dimensional $C^\infty$ sub-bundle $F\subset TM$ that is $\eta$-close to $E^s\oplus E^u$ and $\tau_2=\tau_2(f,X,\eta,F)\in (0,\tau_1]$ such that the following properties are satisfied.
	For $\tau\in[-\tau_2,\tau_2]$, let 
	$h_\tau\colon \bigsqcup\limits_{i\in\mathbb{Z}}\mathbb{R}_i\rightarrow \bigsqcup\limits_{i\in\mathbb{Z}}\sigma_\tau(\mathbb{R}_i)$
	be the leaf conjugacy
	$$h_\tau(t)=\sigma_\tau(\mathbb{R}_i)\cap \widetilde{D}^{F}_t(\delta_1),~~\forall t\in\mathbb{R}_i,$$
	then for any $\tau\in[-\tau_2,\tau_2]$ there exists $\Delta_\tau>0$ such that for any $t\in\mathbb{R}_i$, one has
	\begin{enumerate}
		\item if $0<\tau<\tau_2$, then 
		$$\widetilde{f}_{\tau}\circ h_\tau(t)>h_\tau(\theta^*f(t)+\Delta_\tau);$$
		\item if $-\tau_2<\tau<0$, then 
		$$\widetilde{f}_{\tau}\circ h_\tau(t)<h_\tau(\theta^*f(t)-\Delta_\tau);$$
	\end{enumerate}
	Here the order on each $\sigma_{\tau}(\mathbb{R}_i)$ is inherited from that on $\mathbb{R}_i$.
	The constants $\delta_1=\delta_1(\eta,F)$ and $\tau_1=\tau_1(\eta,F,\delta_1)$ are from Section~\ref{Section:leaf-conjugacy}.
\end{proposition}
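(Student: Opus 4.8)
The statement asserts that, after composing with a time-$\tau$ map of a vector field positively transverse to $E^s\oplus E^u$, the lifted dynamics $\widetilde f_\tau$ pushes every point of the invariant center section strictly forward (for $\tau>0$) by a definite amount $\Delta_\tau$ relative to where the unperturbed $\widehat f$ would send it, uniformly along the whole bi-infinite family $\bigsqcup_i\mathbb{R}_i$. The natural approach is to compare three maps on the center line through each point: $\widehat f=\theta^*f$ restricted to the $0$-section, the lift $\theta^*f$ acting on the perturbed section via the leaf conjugacy $h_\tau$, and then the effect of inserting $\widetilde X_\tau$. First I would fix the data: choose $\eta\in(0,\eta_0]$ and the $C^\infty$-bundle $F$ $\eta$-close to $E^s\oplus E^u$ small enough that $X$ remains positively transverse to $F$ (so that its lift $\widetilde X$ has strictly positive $E^c$-component along the $0$-section, by continuity of the splitting and of $D\Phi$), and invoke Section~\ref{Section:leaf-conjugacy} to get $\delta_1(\eta,F)$ and $\tau_1(\eta,F,\delta_1)$, hence the leaf conjugacy $h_\tau$.

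**Key steps.** Step one: quantify the transversality. Since $X$ is positively transverse to $F^s\oplus F^u$ on the compact manifold $M$, there is $c_0>0$ with $\pi^c(X(z))\ge c_0$ for all $z$ (with respect to the chosen center orientation); pulling back through $\Phi$, the lifted field $\widetilde X$ has center-component bounded below by some $c_1>0$ on $\theta^*(F^s\oplus F^u)(\delta_1/10)$, uniformly in $i$ (this uniformity is exactly the content of Remark~\ref{Rem:periodic-to-general}: the constants depend only on $\lambda$, the norm of $f$, and $X$, not on the leaf). Step two: show that for $\tau>0$ small, $\widetilde X_\tau$ moves each point of $\sigma_\tau(\mathbb{R}_i)$ forward in the $\mathcal{D}^F$-transversal sense — i.e. its image lies in a $\mathcal{D}^F$-leaf strictly ahead, with the amount of forward displacement bounded below by $c_2|\tau|$ for some uniform $c_2>0$, provided $\tau$ is small enough that the time-$\tau$ flow stays inside the chart and the curve $s\mapsto \widetilde X_s(v)$ stays tangent to the $\eta$-cone of $E^c$ (so that it is genuinely transverse to the foliation $\mathcal{D}^F$, by property~(\ref{Item:foliation}) of Section~\ref{Section:leaf-conjugacy}). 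Step three: translate this displacement on $\sigma_\tau(\mathbb{R}_i)$ back to the parameter line $\mathbb{R}_i$ via $h_\tau$. Since $h_\tau\to h_0=\mathrm{id}$ in $C^1$ as $\tau\to0$ and $h_\tau$ is a homeomorphism conjugating the transversal foliation to the $0$-section, the forward $\mathcal{D}^F$-displacement of size $\ge c_2|\tau|$ corresponds to a forward move in $\mathbb{R}_i$ of size $\ge\Delta_\tau$ for a suitable $\Delta_\tau>0$ (one can take $\Delta_\tau$ comparable to $|\tau|$, using property~(\ref{Item:uniform-bound}) for the uniform lower bound transferring across leaves). Finally, combine: $\widetilde f_\tau\circ h_\tau(t)=\widetilde X_\tau(\theta^*f(h_\tau(t)))=\widetilde X_\tau(h_\tau(\theta^*f(t)))$ using $\theta^*f\circ h_\tau=h_\tau\circ\widehat f$ (leaf conjugacy commutes with the unperturbed dynamics on the invariant sections, $\widetilde f_0$-invariance of $\sigma_0$), and then apply Step two–three to conclude $\widetilde X_\tau(h_\tau(\theta^*f(t)))>h_\tau(\theta^*f(t)+\Delta_\tau)$. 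The case $\tau<0$ is symmetric, flipping the sign via negative transversality.

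**Main obstacle.** The delicate point is the uniformity of $\Delta_\tau$ in $i$ together with controlling it independently of the (bi-infinite, hence non-compact) orbit. One must be careful that although $\bigsqcup_i\mathbb{R}_i$ is non-compact, the relevant estimates descend to $M$ via $\theta$ and $\Phi$, which are uniformly controlled because $\Gamma_\infty$ eventually coincides with a genuine $f$-orbit (the tails $x_i=f^i(x_0)$, $x_i=f^{i-n}(x_n)$) and $f$, $X$, the splitting, and the bundles $F^s,F^u,F$ are all defined on compact $M$; so the constants $c_0,c_1,c_2$ and the admissible $\tau$-range are genuinely uniform. A secondary subtlety is verifying that the flow segment of $X$ over time $\tau$ stays tangent to the $\eta$-cone field of $E^c$ — this needs $\tau$ small relative to $\eta$ and the variation of $X$, and is what ultimately pins down $\tau_2=\tau_2(f,X,\eta,F)\le\tau_1$; one also needs the leaf conjugacy $h_\tau$ to be monotone (order-preserving) on each $\mathbb{R}_i$, which follows since $h_0=\mathrm{id}$ and $h_\tau$ depends continuously on $\tau$ with no degeneration (the foliation $\mathcal{D}^F$ stays transverse to $E^c$).
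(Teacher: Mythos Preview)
There is a genuine gap in your ``Finally, combine'' step. You assert the identity
\[
\theta^*f\circ h_\tau \;=\; h_\tau\circ\widehat f,
\]
but this is false: the section $\sigma_\tau$ is invariant under $\widetilde f_\tau=\widetilde X_\tau\circ\theta^*f$, \emph{not} under $\theta^*f$ alone. Consequently $\theta^*f(h_\tau(t))$ does not lie on $\sigma_\tau(\mathbb{R}_{i+1})$, and there is no reason it should equal $h_\tau(\widehat f(t))$. The leaf conjugacy $h_\tau$ is merely a homeomorphism determined by the transversal foliation $\mathcal D^F$; it does not intertwine $\theta^*f$ with $\widehat f$. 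What is true is only that the \emph{discrepancy} between $\theta^*f(h_\tau(t))$ and $h_\tau(\widehat f(t))$ is of order $O(\tau)$ --- but so is the forward push from $\widetilde X_\tau$, so you cannot simply absorb one into the other without a quantitative comparison.

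This is exactly where the substance of the proof lies, and your choice of $\eta$ (``small enough that $X$ remains transverse to $F$'') is not sufficient to carry it out. In the paper's argument one writes $q_\tau=\Phi(h_\tau(t))$, uses the Lipschitz shadowing bound $\|v^{su}_{q_\tau}\|\le L\tau$ from Theorem~\ref{Thm:Lip-shadowing}, then tracks how $f$ moves $q_\tau$: the $su$-component of $f(q_\tau)$ is at most $(\|Df\|+o(1))L\tau$, and its $c$-component relative to $f(z_0)$ is at most $2\eta(\|Df\|+o(1))L\tau$ because $q_\tau$ sits on the $F$-disk of slope $\le 2\eta$. Only then does one apply $X_\tau$, whose $c$-push is $\ge b_0\tau$ but whose $su$-push can be as large as $2a_0\|\widehat X\|\tau$; the net height of $p_\tau=f_\tau(q_\tau)$ above the $F$-disk through $f(z_0)$ is
\[
\ge\; b_0\tau \;-\; 2\eta\Big[\big(\|Df\|+1\big)L + 2a_0\|\widehat X\|\Big]\tau,
\]
which exceeds $\tfrac{b_0}{2}\tau$ precisely because $\eta$ was chosen quantitatively small against $\|Df\|$, $L$, $a_0$, $b_0$. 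Without this balance the $O(\tau)$ error from your missing commutation can cancel the $O(\tau)$ gain from $\widetilde X_\tau$. A secondary issue: the flow curve $s\mapsto X_s(\cdot)$ is \emph{not} tangent to the $\eta$-cone of $E^c$ in general (the $su$-component of $X$ can be large), so your Step two mechanism for reading off forward displacement via cone-tangency does not apply as stated.
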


\begin{proof}
	We only prove the case when $\tau>0$ since the other case when $\tau<0$ is symmetric.
	
	%Recall that the vector field $X\in\mathcal{X}^r(M)$ is  positively transverse to $E^s\oplus E^u$ everywhere.
	% and  satisfies 
	%$$d_{T_zM}\left(\frac{1}{\|X(z)\|}X(z),E^c_z\right)<a_0,~~\forall z\in M,$$
	%where $a_0>0$ is from Section~\ref{Section:setting}. 
	
\paragraph{The lifting vector field $\widehat X$ on $TM$}	
    One first lifts $X$ to $TM$ in the following way: given a point $z\in M$, let $\widehat X_z=D\exp^{-1}_z(X)$. Then $\widehat X_z$ is a vector field which is well defined in a neighborhood of the zero vector $0_z\in T_zM$. Note that for $v\in T_zM$, its tangent space $T_v(T_zM)\cong T_zM$, thus it admits a direct sum $T_v(T_zM)=E_z^s(v)\oplus E_z^c(v)\oplus E_z^u(v)$ induced by $T_zM=E^s_z\oplus E^c_z\oplus E^u_z$.
	For $v\in T_zM$ close to $0_z$, denote by 
	$$\widehat X_z(v)=\widehat X_z^s(v)\oplus \widehat X_z^c(v)\oplus \widehat X_z^u(v),$$
	where $\widehat X_z^\alpha(v)\in E_z^\alpha(v), \alpha=s,c,u$. 
	Recall that  $X$ is  positively transverse to $E^s\oplus E^u$,
	thus there exist constants $\delta_2>0$, $a_0>0$ and $b_0>0$ such that for any $z\in M$,
	\begin{itemize}
		\item for any $v\in T_zM(\delta_2)=E^s_z(\delta_2)\oplus E^c_z(\delta_2)\oplus E^u_z(\delta_2)$, it satisfies
		$$\|\widehat X_z^c(v)\|\geq b_0,~~\text{and}~~ \|\widehat X_z^s(v)+\widehat X_z^u(v)\|<a_0\|\widehat X_z^c(v)\|\leq 2 a_0\|\widehat X\|.$$
		\item for any $v\in E^s_z(\delta_2)\oplus E^u_z(\delta_2)$, the curve $\exp_z\big(v\times  E^c_z(\delta_2)\big)$ is tangent to the $\eta_0$-cone field of $E^c$.
	\end{itemize}
    Recall that one has defined an order on $E^c_z$ that is induced from the orientation of $E^c$ in Section~\ref{Section:setting}.
    One thus can associate the order of $E^c_z$ to $E^c_z(v)$ for each $v\in T_zM$. Then we have $\widehat X_z^c(v)\geq b_0>0$ for every vector $v\in T_zM(\delta_2)$.

\paragraph{The choices of constants $\eta,\tau'$ and the $C^\infty$ bundle $F$ }    
    Let $\tau_0>0$ and $L>1$ be the two constants given by Theorem~\ref{Thm:Lip-shadowing}. 
    Now we take the constant $\eta=\eta(f,X)\in (0,\eta_0]$ and the $(s+u)$-dimensional $C^\infty$ sub-bundle $F\subset TM$ as following.
    \begin{itemize}
    	\item Take $\eta$ to satisfy
    	$$0<\eta<\min\left\{\eta_0,\frac{b_0}{8\cdot \left[\big(\|Df\|+1\big)\cdot L+ 2 a_0\|\widehat X\| \right]}\right\}.$$
    	\item Take an $(s+u)$-dimensional $C^\infty$ sub-bundle $F\subset TM$ that is $\eta$-close to $E^s\oplus E^u$, i.e. $\angle(F,E^s\oplus E^u)<\eta$.
    \end{itemize}
    Associated to $\eta$ and $F$, there exist $\delta_1=\delta_1(\eta,F)\in (0,\delta_0]$ and $\tau_1=\tau_1(\eta,F,\delta_1)\in (0,\tau_0]$ that are from Section~\ref{Section:leaf-conjugacy}. Set $\delta'=10^{-2}\min\big\{\delta_1,\delta_2\big\}$ and take a constant $\tau'=\tau'(f,X,\eta,F)\in (0,\tau_0]$ that satisfies
    $$0<\tau'<\min\left\{\tau_1,\frac{\delta'}{100L\big(\|X\|+1\big)\|Df\|}\right\},$$
    where $\|X\|=\sup_{x\in M}\|X(x)\|$ and $\|Df\|=\sup_{x\in M}\|Df(x)\|$.
    
\paragraph{The estimation of moving along the center direction}
    Fix $t\in\mathbb{R}_i$ and denote by $z_0=\theta_i(t)$. 
    For any $\tau\in[-\tau',\tau']$, let $q_\tau=\Phi(h_\tau(t))$.
    By the choices of constants and Theorem~\ref{Thm:Lip-shadowing}, one has that $q_\tau\in D^F_{z_0}(\delta')$. 
    Moreover, considering the direct sum $\exp^{-1}(q_\tau)=v_{q_\tau}^{su}+v_{q_\tau}^c\in E^{su}_{z_0}\oplus E^c_{z_0}$, it satisfies that 
    $$\|v_{q_\tau}^{su}\|\leq L\cdot\tau\leq \frac{\delta'}{100\big(\|X\|+1\big)\|Df\|}.$$
    Let $\varphi^{su}_{z_0}\colon E^s_{z_0}(\delta_1/2)\oplus E^u_{z_0}(\delta_1/2)\rightarrow E^c_{z_0}(\delta_1/2)$ be the $C^r$-map satisfying that 
    $$\grap\big(\varphi^{su}_{z_0}\big)=\exp^{-1}\left(D^{F}_{z_0}(\delta_1)\right)\cap T_zM(\delta_1/2),$$ 
    then  one has 
    $$v_{q_\tau}^c=\varphi^{su}_{z_0}\big(v_{q_\tau}^{su}\big)<2\eta\cdot\|v_{q_\tau}^{su}\|\leq 2\eta\cdot L\cdot\tau\leq 2\eta\cdot\frac{\delta'}{100\big(\|X\|+1\big)\|Df\|}.$$
     
    Note that $z_0=\theta_i(t)$ and $f(z_0)=\theta_{i+1}(\theta^*f(t))$. By the choices of constants above,
    for any $\tau\in[-\tau',\tau']$, one has
    $$f(q_\tau)\in  B_{f(z_0)}(\delta'/10) ~\qquad~\text{and}~\qquad~ f_\tau(q_\tau)=X_\tau\circ f(q_\tau)\in B_{f(z_0)}(\delta').$$
    Consider the direct sum 
    $$\exp^{-1}_{f(z_0)}(f(q_\tau))=v_{f(q_\tau)}^{su}+v_{f(q_\tau)}^c\in E^{su}_{f(z_0)}\oplus E^c_{f(z_0)},$$
    one has:
    \begin{claim}\label{Claim:center-estimation}
    	There exist a continuous function $\xi\colon \mathbb{R}\rightarrow \mathbb{R}^+$ that depends on $f$ and $X$ satisfying that
    	\begin{enumerate}
    		\item\label{Item:infinitesimal} $\xi(\tau)={\rm o}(\tau)$ as $\tau\rightarrow 0$;
    		\item\label{Item:estimation} the following estimates holds:
    		\begin{itemize}
    			\item $\|v_{f(q_\tau)}^{su}\|\leq\left(\|Df(z_0)\|+\xi(\tau)\right)\cdot\|v_{q_\tau}^{su}\|\leq \left(\|Df(z_0)\|+\xi(\tau)\right)\cdot L\tau$;
    			\item $\|v_{f(q_\tau)}^{c}\|\leq \left(\|Df|_{E^c_{z_0}}\|+\xi(\tau)\right)\cdot\|v_{q_\tau}^{c}\|\leq 2\eta\cdot \left(\|Df|_{E^c_{z_0}}\|+\xi(\tau)\right)\cdot L\tau.$
    		\end{itemize}
    	The second formula implies that 
    	$$v_{f(q_\tau)}^{c}\geq-2\eta\cdot\left(\|Df|_{E^c_{z_0}}\|+\xi(\tau)\right)\cdot L\tau.$$
    	\end{enumerate}
    \end{claim}
    \begin{proof}[Proof of the Claim]
    	The proof is essentially because the map $\exp^{-1}_{f(z_0)}\circ f\circ \exp_{z_0}$ is $C^1$-smooth at $0_{z_0}$  and $D\big(\exp^{-1}_{f(z_0)}\circ f\circ \exp_{z_0}\big)(0_{z_0})=Df(z_0)$. See the details in the proof of~\cite[Claim 4.19]{GS-closing}.
    \end{proof}

\paragraph{The action of $\widehat X_\tau$ on $\exp^{-1}_{f(z_0)}(f(q_\tau))$}
Note first that 
$$\exp_{f(z_0)}\circ\widehat X_\tau\big( \exp^{-1}_{f(z_0)}(f(q_\tau))\big)=X_\tau\circ f(q_\tau)=f_\tau(q_\tau)=\Phi\circ \widetilde f_\tau(h_\tau(t))\colon= p_\tau.$$
This gives 
$$\widehat X_\tau\big( \exp^{-1}_{f(z_0)}(f(q_\tau))\big)=\exp_{f(z_0)}^{-1}\big(\Phi\circ \widetilde f_\tau(h_\tau(t))\big)=\exp_{f(z_0)}^{-1}(p_\tau).$$
We consider the direct sum
$$\widehat X_\tau\big( v_{f(q_\tau)}^{su}+v_{f(q_\tau)}^c\big)=\widehat X_\tau\big( \exp^{-1}_{f(z_0)}(f(q_\tau))\big)=v_{p_\tau}^{su}+v_{p_\tau}^c\in E^{su}_{f(z_0)}\oplus E^c_{f(z_0)}.$$
It satisfies for every $\tau\in[-\tau',\tau']$ that 
\begin{itemize}
	\item $\displaystyle\|v_{p_\tau}^{su}\|\leq  \|v_{f(q_\tau)}^{su}\|+\sup_{v\in T_{f(z_0)}M(\delta_2)}\left(\|\widehat X_{f(z_0)}^s(v)+\widehat X_{f(z_0)}^u(v)\|\right)\cdot\tau\leq \|v_{f(q_\tau)}^{su}\|+2 a_0 \|\widehat X\|\cdot \tau.$
	
    \item $\displaystyle v_{p_\tau}^c\geq -\|v_{f(q_\tau)}^c\|+\inf_{v\in T_{f(z_0)}M(\delta_2)}\|\widehat X_{f(z_0)}^c(v)\| \cdot \tau\geq -\|v_{f(q_\tau)}^c\|+b_0\cdot\tau.$
\end{itemize}
Combined with Claim~\ref{Claim:center-estimation}, one has
$$\|v_{p_\tau}^{su}\|\leq \left[\big(\|Df(z_0)\|+\xi(\tau)\big)\cdot L+ 2 a_0 \|\widehat X\|)\right]\cdot\tau ~~~\text{and}~~~
v_{p_\tau}^c\geq b_0\cdot \tau -2\eta\cdot\left(\|Df|_{E^c_{z_0}}\|+\xi(\tau)\right)\cdot L\cdot\tau.$$

\paragraph{The constant $\tau_2$} By Item~\ref{Item:infinitesimal} of Claim~\ref{Claim:center-estimation}, since $\xi(\tau)$ is a positive infinitesimal as $\tau\rightarrow 0$, there exists $\tau_2\in(0,\tau']$ such that for any $\tau\in[-\tau_2,\tau_2]$, the following holds
$$0<\xi(\tau)<1 ~~\text{and}~~\left[\big(\|Df\|+1\big)\cdot L+ 2 a_0 \|\widehat X\|)\right]\cdot|\tau|<\delta'.$$
Consider the disk $D^F_{f(z_0)}(\delta')$ and let Let $\varphi^{su}_{f(z_0)}\colon E^s_{f(z_0)}(\delta_1/2)\oplus E_{f(z_0)}^u(\delta_1/2)\rightarrow E_{f(z_0)}^c(\delta_1/2)$ be the $C^r$-map whose graph determines $D^F_{f(z_0)}(\delta')$ through the exponential map $\exp_{f(z_0)}$. When $\tau\in(0,\tau_2]$, by the choice of $\eta$, one has
\begin{align*}
	v_{p_\tau}^c- \varphi^{su}_{f(z_0)}(v_{p_\tau}^{su})
	&>b_0\cdot \tau -2\eta\cdot\left(\|Df|_{E^c_{z_0}}\|+1\right)\cdot L\cdot\tau- 2\eta\|v_{p_\tau}^{su}\|\\
	&> b_0\cdot \tau - 2\eta\cdot \left[2\big(\|Df\|+1\big)\cdot L+ 2 a_0\|\widehat X\| \right]\cdot\tau\\
	&>b_0\cdot \tau - \frac{b_0}{2}\cdot \tau =\frac{b_0}{2}\cdot \tau
\end{align*}
Thus when $0<\tau<\tau_2$, the point 
$$p_\tau=\exp_{f(z_0)}(v_{p_\tau}^{su}+v_{p_\tau}^c)=f_\tau(q_\tau)=\Phi\circ \widetilde f_\tau(h_\tau(t))$$
belong to $D^{F}_{\theta_{i+1}(t)}(\delta')$ for some $t'\in\mathbb{R}_{i+1}$ that satisfies
$$t'>\theta^*f(t).$$

Finally, consider the segment 
$$\gamma\colon=\exp_{f(z_0)}\left(\big\{v_{p_\tau}^{su}+v^c\colon  \varphi^{su}_{f(z_0)}(v_{p_\tau}^{su}) \leq v^c\leq  v_{p_\tau}^c \big\}\right).$$
One has that $\gamma$ is tangent to the $\eta_0$-cone field of $E^c$ with length uniformly bounded from from below (depending on $\tau$). By Item~\ref{Item:uniform-bound} of Section~\ref{Section:leaf-conjugacy}, there exists $\Delta_\tau>0$ such that $t'-\theta^*f(t)>\Delta_\tau$.
As a consequence, one has
$$\widetilde{f}_{\tau}\circ h_\tau(t)>h_\tau(\theta^*f(t)+\Delta_\tau).$$
This completes the proof.
\end{proof}

\begin{remark}
	The constant $\Delta_\tau$ depends on $f,X,F$ and $\tau$ but is independent with the choice of center pseudo orbit $\Gamma$.
\end{remark}

%***********************************************
\subsection{Proof of Theorem~\ref{Thm:main}}

%One extends $\Gamma$ to be bi-infinite. To be precise, let
%$$\displaystyle\Gamma_\infty=\big\{x_i\big\}_{i=-\infty}^{+\infty}=\big\{\cdots,x_{-1},x_0,x_1,\cdots,x_n,x_{n+1},\cdots\big\}$$ 
%such that 
%$$x_i=f^i(x) \text{~when~} i\leq -1 \text{~and~} x_i=f^{i-n}(y) \text{~when~} i\geq n+1.$$

Based on Proposition~\ref{Prop:center-perturbation}, we are ready to prove Theorem~\ref{Thm:main}.
As we have mentioned before, we have to deal with general orbits of center leaves rather than periodic ones which are considered in the proof of~\cite[Theorem 2.3]{GS-closing}.

\begin{proof}[Proof of Theorem~\ref{Thm:main}]
	Let $f\in\dph_1^r(M), r\in\mathbb{N}_{\geq 2}$. 
	Assume $X\in\mathcal{X}^r(M)$ is a vector field that is transverse to $E^s\oplus E^u$ and the constant $\varepsilon_0$ is taken as in Section~\ref{Section:setting}.
	Let $L>1$ be from Theorem~\ref{Thm:Lip-shadowing}.
	Let the constant $\eta>0$, the $C^\infty$-bundle $F\subset TM$ and the constant $\tau_2$ be from Proposition~\ref{Prop:center-perturbation}.
	For each 
	$$k>2\max\left\{\frac{1}{\tau_2},\frac{L+1}{\varepsilon_0}\right\},$$
	let $\Delta_{1/k}>0$ be the constant associated to $\tau=1/k$ from Proposition~\ref{Prop:center-perturbation} and take $\varepsilon\in (0,\varepsilon_0)$ satisfying
	$$0<\varepsilon<\min\big\{1/k,\Delta_{1/k}\big\}.$$
	\medskip
	
	Assume $x\dashv_f y$. By Proposition~\ref{Pro:center-pseudo-orbit}, one takes an $\varepsilon$-center pseudo orbit
	$$\Gamma=\big\{x_0,x_1,\cdots,x_n\big\}$$ 
	such that
	 $$d(x_0,x)<\varepsilon ~~\text{and}~~d(x_n,y)<\varepsilon.$$ 
	Moreover, one assumes that $\Gamma$ satisfies Lemma~\ref{Lem:ordered-center-chain}. 
	And if $x\in\CR(f)$, i.e. $y=x$, one can take $\Gamma$ to satisfy that $x_0=x_n$ by
	Proposition~\ref{Pro:center-pseudo-orbit} and Remark~\ref{Rem:center-pseudo-orbit}. %Corollary~\ref{Cor:periodic-center-shadowing}.
	Without loss of generality, we assume that $-\varepsilon<t_i\leq 0$ for each $0\leq i\leq n$ and the other case is symmetric.
	
	Let  $\displaystyle\Gamma_\infty=\big\{x_i\big\}_{i=-\infty}^{+\infty}$ be the extended bi-infinite center pseudo orbit such that $x_i=f^i(x_0)$ when $i\leq -1$ and $x_i=f^{i-n}(x_n)$ when $i\geq n+1$.
	Denote 
	$$
	\theta^*f:~\theta^*(F^s\oplus F^u)(\delta_0/C_0) \to \theta^*(F^s\oplus F^u)(\delta_0/2)
	$$
	the bundle dynamics associated to $\Gamma_\infty$ as Section \ref{Seciton:lifting-bundle}. Moreover, let
	$$
	\widetilde{f}_\tau=\widetilde{X}_\tau\circ \theta^*f\colon \theta^*(F^s\oplus F^u)(\delta_0/C_0)\rightarrow \theta^*(F^s\oplus F^u)(\delta_0)
	$$
	be the action of lifting vector field of $X$ composed with $\theta^*f$ as Section \ref{Section:Lipschtz-shadowing}.
	
	From the choice of center pseudo orbit $\Gamma$, one knows that
	$$\widetilde{f}_0^i(0_0)=(\theta^*f)^i(0_0)\leq 0_i, ~~\forall 1\leq i\leq n.$$
	In the trivial case when $\widetilde f_0^n(0_0)=0_n$ which is equivalent to that $f^n(x_0)=x_n$, the collection $\big\{x_0,x_1,\cdots,x_n\big\}$ is  a piece of orbit segment from $x_0$ to $x_n$ and the conclusion holds automatically.
	
	We now assume that $\widetilde f_0^n(0_0)<0_n$. Consider the bundle dynamics $\theta^*f$ on the lifting bundle $\theta^*(F^s\oplus F^u)$ and its perturbation $\widetilde f_{1/k}=\widetilde{X}_{1/k}\circ \theta^*f$ defined on $\theta^*(F^s\oplus F^u)(\delta_0/C_0)$. Let $h_{1/k}\colon \bigsqcup\limits_{i\in\mathbb{Z}}\mathbb{R}_i\rightarrow \bigsqcup\limits_{i\in\mathbb{Z}}\sigma_{1/k}(\mathbb{R}_i)$ be the leaf conjugacy in Section~\ref{Section:leaf-conjugacy}. Note that by the choices of constants, one has 
	$$-\Delta_{1/k}<-\varepsilon<t_i\leq 0,~~\forall 1\leq i\leq n.$$	
	By Proposition~\ref{Prop:center-perturbation}, one has the following estimation for the zero point $0_0\in\mathbb{R}_0$:
	\begin{align*}
		\widetilde{f}_{1/k}^n\left(h_{1/k}\big(0_0\big)\right)
		&> \widetilde{f}_{1/k}^{n-1}\left(h_{1/k}\big(\theta^*f(0_0)+\Delta_{1/k}\big)\right)
		= \widetilde{f}_{1/k}^{n-1}\left(h_{1/k}\big(t_1+\Delta_{1/k}\big)\right)\\
		&> \widetilde{f}_{1/k}^{n-1}\left(h_{1/k}\big(0_1\big)\right)\\
		&>\cdots\cdots \\
		&>  \widetilde{f}_{1/k}\left(h_{1/k}\big(0_{n-1}\big)\right)\\
		&> h_{1/k}\big(\theta^*f(0_{n-1})+\Delta_{1/k}\big)= h_{1/k}\big(t_n+\Delta_{1/k}\big)\\
		&> h_{1/k}\big(0_n\big).
	\end{align*}

    Recall that $\widetilde f_0^n(0_0)<0_n$ and $\sigma_{\tau}$ as well as $h_\tau$ varies continuous with respect to $\tau$. Thus there exists $\tau_k\in (0,1/k)$ such that 
    $$\widetilde{f}_{\tau_k}^n\left(h_{\tau_k}\big(0_0\big)\right)=h_{\tau_k}\big(0_n\big).$$
    Let $p_k=\Phi\big(h_{\tau_k}\big(0_0\big)\big)$ and $q_k=\Phi\big(h_{\tau_k}\big(0_n\big)\big)$, then it satisfies
    $$f_{\tau_k}^n\big(p_k\big)=\big(X_{\tau_k}\circ f\big)^n\big(p_k\big)=q_k.$$
    Moreover, by Theorem~\ref{Thm:Lip-shadowing}, one has that 
    $$d(x,p_k)\leq d(x,x_0)+d(x_0,p_k)<\varepsilon+ L\cdot\tau_k<\frac{L+1}{k},$$
    $$d(y,q_k)\leq d(y,x_n)+d(x_n,q_k)<\varepsilon+ L\cdot\tau_k<\frac{L+1}{k}.$$

    In particular, when $x\in\CR(f)$, i.e. $y=x$, one has $x_0=x_n$ as well as $0_0=0_n$, which implies that
    $$p_k=\Phi\big(h_{\tau_k}\big(0_0\big)\big)=\Phi\big(h_{\tau_k}\big(0_n\big)\big)=q_k.$$
    As a consequence, one has that $f_{\tau_k}^n\big(p_k\big)=\big(X_{\tau_k}\circ f\big)^n\big(p_k\big)=p_k.$
    
    By taking $k\rightarrow\infty$, this allows $\varepsilon$ to be arbitrarily small for the $\varepsilon$-center pseudo orbit $\Gamma$. This finishes the proof of Theorem~\ref{Thm:main}.
\end{proof}

\subsection{Proof of  Corollary~\ref{Thm:generic}}

Based on Theorem~\ref{Thm:main}, the proof of Item~\ref{Item:equivalent-of-attainability} in Corollary~\ref{Thm:generic} is obtained from the classical Baire arguments, see for instance~\cite[Section 5]{BC-chain-connecting} and~\cite[Section 3]{gw}. 
Item~\ref{Item:dense-periodic orbit} \&~\ref{Item:global-transitive}  are direct consequences of Item~\ref{Item:equivalent-of-attainability}.
We provide a short proof for completeness of the paper.
As have mentioned,  the reason that we consider the set $\deph^r_1(M)$ 
in Corollary~\ref{Thm:generic} is because $\deph^r_1(M)$ forms an open set in $\ph^r(M)$.
%We give a sketch for completeness. The proof of Item~\ref{Item:global-transitive} relies on works in~\cite{Br,Didier,BBHHTU,HHU-accessibility} which study the accessibility  of partially hyperbolic diffeomorphisms. 

%Based on Theorem~\ref{Thm:main}, the proofs of Item~\ref{Item:equivalent-of-attainability} \&~\ref{Item:dense-periodic orbit} of  Corollary~\ref{Thm:generic} are obtained from the classical Baire arguments.  We give a sketch for completeness. The proof of Item~\ref{Item:global-transitive} relies on works in~\cite{Br,Didier,BBHHTU,HHU-accessibility} which study the accessibility  of partially hyperbolic diffeomorphisms. 

\begin{proof}[Proof of  Corollary~\ref{Thm:generic}]
	%We first give the Baire arguments to prove Item~\ref{Item:equivalent-of-attainability} \&~\ref{Item:dense-periodic orbit}.  
	As a consequence of~\cite[Theorem A]{GS-closing}, there exists a dense $G_\delta$ subset $\mathcal{R}_0$ in $\deph_1^r(M)$ such that every $f\in\mathcal{R}_0$ satisfies $\Omega(f)=\overline{{\rm Per}(f)}$.
	
	Take a countable basis $\big\{O_n\big\}_{n\in\mathbb{N}}$ of $M$ and let $\big\{W_n\big\}_{n\in\mathbb{N}}$ be the countable collection where every $W_n$ is the union of finitely many elements in $\big\{O_n\big\}_{n\in\mathbb{N}}$. For each $n,m\in\mathbb{N}$ one defines the following two open sets $\mathcal{H}_{n,m}$ and $\mathcal{N}_{n,m}$ in $\deph_1^r(M)$:
	\begin{itemize}
		\item $f\in \mathcal{H}_{n,m}$ if there exists a neighborhood $\mathcal{U}$ of $f$ in $\deph_1^r(M)$ such that for any $g\in\mathcal{U}$, there exists $k\geq 1$ satisfying $g^k(W_n)\cap W_m\neq\emptyset$;
		\item $f\in \mathcal{N}_{n,m}$ if there neighborhood $\mathcal{U}$ of $f$ in $\deph_1^r(M)$ such that for any $g\in\mathcal{U}$ and for any $k\geq 1$, it satisfies $g^k(W_n)\cap W_m=\emptyset$.
	\end{itemize}
	It is not difficult to verify that $\mathcal{N}_{n,m}={\rm Int} \big(\deph_1^r(M)\setminus  \mathcal{H}_{n,m}\big)$ where ${\rm Int}(\cdot)$ means taking the interior. Thus $\mathcal{H}_{n,m}\cup \mathcal{N}_{n,m}$ is open dense in $\deph_1^r(M)$. 
	Let
	$$\mathcal{R}=\mathcal{R}_0\cap \left(\bigcap_{n,m\in\mathbb{N}}\big(\mathcal{H}_{n,m}\cup \mathcal{N}_{n,m}\big)\right).$$
	Then $\mathcal{R}$ is a dense $G_\delta$ subset in $\deph_1^r(M)$.
%	\begin{claim}\label{Claim:1}
	We claim that the conclusions of Corollary~\ref{Thm:generic} hold for $f\in\mathcal{R}$.
%	\end{claim}
 %   \begin{proof}[Proof of Claim~\ref{Claim:1}]
 
    	Let $f\in\mathcal{R}$.    	
    	Assume $x\dashv_f y$. For any neighborhood $U$ of $x$ and any neighborhood $V$ of $y$, take $n,m\in\mathbb{N}$ such that $W_n\subset U$ and $W_m\subset V$. By Theorem~\ref{Thm:main}, for any neighborhood $\mathcal{U}$ of $f$ in $\deph_1^r(M)$, there exists $g\in\mathcal{U}$ and $k\geq 1$ such that $g^k(W_n)\cap W_m\neq\emptyset$. This implies that $f\in\mathcal{H}_{n,m}$ since $f\in \mathcal{H}_{n,m}\cup \mathcal{N}_{n,m}$. As a consequence, there exists $\ell>1$ such that $f^\ell(U)\cap V\neq\emptyset$ by the definition of $\mathcal{H}_{n,m}$ together with the fact $W_n\subset U$ and $W_m\subset V$. Since $U$ and $V$ are taken arbitrarily, one has $x\prec_f y$. This proves Item~\ref{Item:equivalent-of-attainability}.
    	
    	By Item~\ref{Item:equivalent-of-attainability}, for any $x\in\CR(f)$, it satisfies $x\prec_f x$ which is equivalent to say $x\in\Omega(f)$. Thus one has $\CR(f)=\Omega(f)$. Since $f\in\mathcal{R}$, one has $\Omega(f)=\overline{{\rm Per}(f)}$, This verifies Item~\ref{Item:dense-periodic orbit}.
    	
    	To prove Item~\ref{Item:global-transitive}, assume $f\in\mathcal{R}$ is chain-transitive. For any two non-empty open sets $U,V\subset M$, there exists $x\in U$ and $y\in V$ such that $x\dashv_f y$. By Item~\ref{Item:equivalent-of-attainability}, one has $x\prec_f y$, thus there exists $n\geq 1$ such that $f^n(U)\cap V\neq\emptyset$. This proves that $f$ is transitive.
\end{proof}	

\bibliographystyle{plain}

\begin{thebibliography}{99}
	
		
	\bibitem{Anosov}
	\newblock D.V. Anosov,
	\newblock Geodesic flows on closed Riemannian manifolds of negative curvature. (Russian) 
	\newblock \textit{Trudy Mat. Inst. Steklov.} \textbf{90} 1967 209 pp.
	
	\bibitem{Arnaud} 
	\newblock M.-C. Arnaud, 
	\newblock Cr\'eation de connexions en topologie $C^1$. 
	\newblock \textit{Ergodic Theory Dynam. Systems} \textbf{21} (2001), 339--381.
	
	\bibitem{AI}
	\newblock M. Asaoka and K. Irie,
	\newblock A $C^\infty$ closing lemma for Hamiltonian diffeomorphisms of closed surfaces.
	\newblock \emph{Geom. Funct. Anal.} \textbf{26} (2016), 1245--1254. 
	
	\bibitem{BC-chain-connecting} 
	\newblock C. Bonatti and S. Crovisier, 
	\newblock R\'{e}currence et g\'{e}n\'{e}ricit\'{e}. 
	\newblock \textit{Invent. Math.} \textbf{158} (2004), 33--104.
	
	
	\bibitem{BD-robust-transitive-diffeo} 
	\newblock C. Bonatti and L. D\'iaz, 
	\newblock Persistent nonhyperbolic transitive diffeomorphisms. 
	\newblock \textit{Ann. of Math.} (2) \textbf{143} (1996), 357--396.
	
	\bibitem{Br}
	\newblock M. Brin,
	\newblock Topological transitivity of a certain class of dynamical systems, and flows of frames on manifolds of negative curvature. 
	\newblock \textit{Funkcional. Anal. i Prilozen.} \textbf{9} (1975),  9--19 (Russian).
	
	%\bibitem{BBI}
	%\newblock M. Brin, D. Burago, S. Ivanov,
	%\newblock Dynamical coherence of partially hyperbolic diffeomorphisms of the 3-torus,
	%\newblock \emph{J. Mod. Dyn.}, \textbf{3} (2009), no. 1, 1--11.
	
%	\bibitem{BBHHTU} 
%	\newblock K. Burns, F. Rodriguez Hertz, M. A. Rodriguez Hertz, A. Talitskaya and R. Ures, 
%	\newblock Density of accessibility for partially hyperbolic diffeomorphisms with 1-dimensional center. 
%	\newblock \textit{Discrete Contin. Dyn. Syst.} {\bf 22} (2008), 75--88. 
	
	\bibitem{CPZ}
	\newblock D. Cristofaro-Gardiner, R. Prasad and B. Zhang,
	\newblock Periodic Floer homology and the smooth closing lemma for area-preserving surface diffeomorphisms.
	\newblock ArXiv:2110.02925.
	
%	\bibitem{conley} 
%	\newblock C. Conley, Isolated Invariant Sets and the Morse Index. 
%	\newblock \textit{CBMS Regional Conference Series in Mathematics} 
%	\newblock \textbf{38}, American Mathematical Society, Providence. RI (1978).
	
	\bibitem{C-IHES} 
	\newblock S. Crovisier, 
	\newblock Periodic orbits and chain-transitive sets of $C^1$-diffeomorphisms. 
	\newblock \textit{Publ. Math. Inst. Hautes \'{E}tudes Sci.} \textbf{104} (2006), 87--141.
	
	\bibitem{C-habitation} 
	\newblock S. Crovisier, 
	\newblock Perturbation de la dynamique de diff\'{e}omorphismes en petite r\'{e}gularit\'{e}. 
	\newblock \textit{Ast\'{e}risque} \textbf{354} (2013).
	
%	\bibitem{Didier}
%	\newblock P. Didier, 
%	\newblock Stability of accessibility. 
%	\newblock \textit{Ergodic Theory Dynam. Systems} {\bf 23} (2003), 1717--1731.
%	
	\bibitem{EH}
	\newblock O. Edtmair and M. Hutchings,
	\newblock PFH spectral invariants and $C^\infty$ closing lemmas.
	\newblock ArXiv:2110.02463.
	
	\bibitem{GS-pams}
	\newblock S. Gan and Y. Shi, 
	\newblock A chain transitive accessible partially hyperbolic diffeomorphism which is non-transitive. \newblock \textit{Proc. Amer. Math. Soc.} {\bf 146} (2018),  223--232.
	
	\bibitem{GS-closing}
	\newblock S. Gan and Y. Shi,
	\newblock $C^r$-closing lemma for partially hyperbolic diffeomorphisms with 1D-center bundle.
	\newblock \textit{Adv. Math.} {\bf 407} (2022), Paper No. 108553, 76 pp.
	
	\bibitem{gw} 
	\newblock S. Gan and L. Wen, 
	\newblock Heteroclinic cycles and homoclinic closures for generic diffeomorphisms. 
	\newblock \textit{J. Dynam. Diff. Eq.} \textbf{15} (2003), 451--471.
	
	\bibitem{gourmelon} 
	\newblock N. Gourmelon, 
	\newblock Adapted metrics for dominated splittings. 
	\newblock \textit{Ergodic Theory Dynam. Systems} \textbf{27} (2007), 1839--1849.
	
	\bibitem{Gu}
	\newblock C. Gutierrez,
	\newblock A counter-example to a $C^2$ closing lemma.
	\newblock \emph{Ergodic Theory Dynam. Systems} \textbf{7} (1987),  509--530.
	
	
	\bibitem{H1}
	\newblock A. Hammerlindl,
	\newblock Leaf conjugacies on the torus.
	\newblock \emph{Ergodic Theory Dynam. Systems} \textbf{33} (2013),  896--933.
	
	\bibitem{HP}
	\newblock A. Hammerlindl and R. Potrie,
	\newblock Pointwise partial hyperbolicity in three-dimensional nilmanifolds.
	\newblock \emph{J. Lond. Math. Soc.} \textbf{89} (2014),  853--875.
	
	
	\bibitem{Hayashi} 
	\newblock S. Hayashi, 
	\newblock Connecting invariant manifolds and the solution of the C$^1$-stability and $\Omega$-stability conjectures for flows. 
	\newblock \textit{Ann. of Math.} \textbf{145} (1997), 81--137.
	
%	\bibitem{HHU-accessibility}
%	\newblock F. Rodriguez Hertz, M. A. Rodriguez Hertz and R. Ures,
%	\newblock Accessibility and stable ergodicity for partially hyperbolic diffeomorphisms with 1D-center bundle.
%	\newblock \textit{Invent. Math.}  \textbf{172} (2008),  353--381.
	
	\bibitem{hps} 
	\newblock M. Hirsch, C. Pugh and M. Shub, 
	\newblock \textit{Invariant Manifolds. Lecture Notes in Mathematics} 
	\newblock \textbf{583}, Springer Verlag, Berlin (1977).
	
	\bibitem{Hu-Zhou-Zhu}
	\newblock  H. Hu, Y. Zhou and Y. Zhu, 
	\newblock Quasi-shadowing for partially hyperbolic diffeomorphisms. 
	\newblock \textit{Ergodic Theory Dynam. Systems} {\bf 35} (2015), 412--430.
	
	\bibitem{KT-center shadowing}
	\newblock S. Kryzhevich and S. Tikhomirov, 
	\newblock Partial hyperbolicity and central shadowing. 
	\newblock \textit{Discrete Contin. Dyn. Syst.} {\bf 33} (2013),  2901--2909.
	
	
	\bibitem{mane-ergodic-closing} 
	\newblock R. Ma\~{n}\'{e}, 
	\newblock An ergodic closing lemma. 
	\newblock \textit{Ann. of Math.} \textbf{116} (1982), 503--540.
	
	\bibitem{mane-h-point}
	\newblock R. Ma\~{n}\'{e}, 
	\newblock On the creation of homoclinic points. 
	\newblock \emph{Inst. Hautes \'Etudes Sci. Publ. Math.} \textbf{66} (1988), 139--159. 
	
	\bibitem{mane-stability-conjecture} 
	\newblock R. Ma\~{n}\'{e}, 
	\newblock A proof of the $C^1$ stability conjecture. 
	\newblock \emph{Inst. Hautes \'Etudes Sci. Publ. Math.} \textbf{66} (1988), 161--210.
	
	
	\bibitem{Martinchich}
	\newblock S. Martinchich,
	\newblock Global stability of discretized Anosov flows.
	\newblock ArXiv: 2204.03825.
	
	\bibitem{Po}
	\newblock R. Potrie,
	\newblock Partial hyperbolicity and foliations in $\mathbb{T}^3$.
	\newblock \emph{J. Mod. Dyn.} \textbf{9} (2015), 81--121.
	
	\bibitem{Pugh}
	\newblock C. Pugh, 
	\newblock The closing lemma. 
	\newblock {\it Amer. J. Math.} {\bf 89} (1967), 956--1009. 
	
	
	\bibitem{WX-connecting} 
	\newblock L. Wen and Z. Xia, 
	\newblock $C^1$ connecting lemmas. 
	\newblock \textit{Trans. Amer. Math. Soc.} \textbf{352} (2000), 5213--5230.
	
	
	\bibitem{yoccoz}
	\newblock J.-C. Yoccoz, 
	\newblock Recent developments in dynamics. 
	\newblock \textit{Proceedings of the International Congress of Mathematicians} Vol. 1, 2 (Z\"urich, 1994), 246--265, 
	
	
		
\end{thebibliography}

\flushleft{\bf Yi Shi} \\
School of Mathematics, Sichuan University, Chengdu, 610065, P. R. China\\
\textit{E-mail:} \texttt{shiyi@scu.edu.cn}\\

\flushleft{\bf Xiaodong Wang} \\
School of Mathematical Sciences,  CMA-Shanghai, Shanghai Jiao Tong University, Shanghai, 200240, P. R. China\\
\textit{E-mail:} \texttt{xdwang1987@sjtu.edu.cn}\\

\end{document}